\documentclass[12pt]{amsart}
\usepackage{amsthm}
\usepackage{amssymb}
\usepackage{cite}
\usepackage{longtable}
\usepackage{geometry}
\geometry{verbose,tmargin=0.8in,bmargin=0.8in,lmargin=1in,rmargin=1in}
\usepackage{enumerate}
\usepackage{setspace}
\usepackage{tikz-cd}
\usetikzlibrary{matrix, arrows}
\usepackage[unicode=true]
 {hyperref}
\hypersetup{
colorlinks=true,
urlcolor=black,
citecolor=blue,
linkcolor=blue,
}

\onehalfspacing
\allowdisplaybreaks
\setcounter{tocdepth}{1}

\newtheorem{thm}{Theorem}[section]
\newtheorem{prop}[thm]{Proposition}
\newtheorem{lem}[thm]{Lemma}
\newtheorem{cor}[thm]{Corollary}

\theoremstyle{definition}
\newtheorem{definition}[thm]{Definition}

\newtheorem*{assumption}{Assumption}
\theoremstyle{remark}
\newtheorem{remark}[thm]{Remark}

\numberwithin{equation}{section}

\newcommand{\bZ}{\mathbb{Z}}
\newcommand{\bN}{\mathbb{N}}
\newcommand{\Perm}{\mathrm{Perm}}
\newcommand{\Hol}{\mathrm{Hol}}
\newcommand{\NHol}{\mathrm{NHol}}
\newcommand{\Aut}{\mathrm{Aut}}
\newcommand{\Map}{\mathrm{Map}}
\newcommand{\Norm}{\mathrm{Norm}}
\newcommand{\ep}{\epsilon}
\newcommand{\mmod}{\hspace{-3.5mm}\mod}
\newcommand{\pmmod}{\hspace{-2.5mm}\pmod}

\begin{document}

\large 

\title[The multiple holomorph of split metacyclic $p$-groups]{The multiple holomorph of split metacyclic $p$-groups}
\author{Cindy (Sin Yi) Tsang}
\address{Department of Mathematics, Ochanomizu University, Tokyo, Japan}
\email{tsang.sin.yi@ocha.ac.jp}\urladdr{http://sites.google.com/site/cindysinyitsang/} 

\date{\today}

\maketitle

\begin{abstract}Given any group $G$, the normalizer $\mathrm{Hol}(G)$ of the subgroup of left translations in the group of all permutations on $G$ is called the holomorph, and the normalizer $\mathrm{NHol}(G)$ of $\mathrm{Hol}(G)$ in turn is called the multiple holomorph. The quotient $T(G) = \mathrm{NHol}(G)/\mathrm{Hol}(G)$ has been computed for various families of groups $G$ in the literature. In this paper, we shall supplement the existing results by considering finite split metacyclic $p$-groups $G $ with $p$ an odd prime. Our work gives new examples of groups $G$ for which $T(G)$ is not a $2$-group.
\end{abstract}

\tableofcontents

\section{Introduction}

Let $G$ be a group and write $\Perm(G)$ for the group of all permutations on $G$. Recall that a subgroup $N$ of $\Perm(G)$ is called \emph{regular} if the map
\[ N\longrightarrow G;\hspace{1em}\eta \mapsto \eta(1_G)\]
is bijective. For example, the images of the left and right regular representations of $G$, respectively defined by
\[ \begin{cases}
\lambda: G\longrightarrow \Perm(G);\hspace{1em}\lambda(\sigma) = (\tau \mapsto \sigma\tau),\\
\rho: G\longrightarrow \Perm(G);\hspace{1em}\rho(\sigma) = (\tau\mapsto \tau\sigma^{-1}),
\end{cases}\]
are regular subgroups of $\Perm(G)$. The \emph{holomorph of $G$} is defined to be
\[ \Hol(G) = \rho(G)\rtimes \Aut(G).\]
Let $\Norm(\cdot)$ denote normalizer in $\Perm(G)$. Then, it is easy to check that
\[ \Norm(\lambda(G)) = \Hol(G) = \Norm(\rho(G)).\]
The \emph{multiple holomorph of $G$} in turn is defined to be
\[ \NHol(G) = \Norm(\Hol(G)).\]
We are interested in the quotient group
\[ T(G) = \NHol(G)/\Hol(G).\]
The study of $T(G)$ was initiated by G. A. Miller in \cite{Miller}. The motivation was that $T(G)$ acts regularly via conjugation on the set of regular subgroups $N$ of $\Perm(G)$ with $N\simeq G$ and $\Norm(N) = \Hol(G)$. A proof of this  simple fact may be found in \cite[Section 1]{Kohl} or \cite[Section 2]{Tsang ASG}, for example. The structure of $T(G)$ was determined for all finite abelian groups $G$ in \cite{Miller} and later for all finitely generated abelian groups $G$ in \cite{Mills}. Notice that these two papers were published in 1908 and 1951. 

\vspace{1mm}

Research on the group $T(G)$ was revitalized by T. Kohl's paper \cite{Kohl} in 2015, and since then the structure of $T(G)$ has been investigated for other families of groups $G$; see \cite{Caranti1, Caranti2, Caranti3, Tsang ASG, Tsang squarefree, Tsang ADM}. Interestingly, in a lot of the known cases $T(G)$ turns out to be a $2$-group, or even an elementary abelian $2$-group. But there are exceptions and the first such example was given by A. Caranti in \cite{Caranti3}. He showed that $T(G)$ is not a $2$-group for certain $p$-groups $G$ of nilpotency class $2$ with $p$ odd. The present author extended this result slightly to $p$-groups $G$ of nilpotency class at most $p-1$ in \cite{Tsang squarefree}. She also gave examples of groups of the form $G = A\rtimes C$, where $A$ is abelian and $C$ is cyclic of order coprime to the exponent of $A$,  such that $T(G)$ is not a $2$-group. We shall say a bit more about how to construct elements of odd order in $T(G)$ later in Section \ref{last section}.


\vspace{1mm}

The group $T(G)$ acts regularly on and so has the same cardinality as the set of regular subgroups $N$ of $\Perm(G)$ with $N\simeq G$ and $\Norm(N)=\Hol(G)$. These $N$ turn out to be precisely the normal regular subgroups $N$ of $\Hol(G)$ with $N\simeq G$ when $G$ is finite. Again a proof of this simple fact may be found in \cite[Section 1]{Kohl} or \cite[Section 2]{Tsang ASG}. It is easy to see that any regular subgroup $N$ of $\Hol(G)$, not necessarily isomorphic to $G$, must be of the shape
\[ N_\Gamma = \{\rho(\sigma)\Gamma(\sigma) : \sigma \in G\}\mbox{ for some }\Gamma\in\Map(G,\Aut(G)).\]
For $N_\Gamma$ to be a subgroup, in which case regularity is guaranteed, of course $\Gamma$ needs to satisfy certain properties. For $N_\Gamma$ to be a normal subgroup, we have the following nice criterion:

\begin{prop}\label{Gamma lem}For any $\Gamma \in \mathrm{Map}(G,\Aut(G))$, the set $N_\Gamma$ above is a normal (regular) subgroup of $\Hol(G)$ if and only if 
\[ \Gamma(\sigma\tau) = \Gamma(\tau)\Gamma(\sigma)\mbox{ and }\Gamma(\varphi(\sigma)) = \varphi\Gamma(\sigma)\varphi^{-1}\]
hold for all $\sigma,\tau\in G$ and $\varphi\in\Aut(G)$. 
\end{prop}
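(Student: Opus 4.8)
The plan is to work entirely inside the semidirect product $\Hol(G)=\rho(G)\rtimes\Aut(G)$, in which every element has a unique expression $\rho(\mu)\psi$ with $\mu\in G$ and $\psi\in\Aut(G)$. The whole argument rests on two elementary relations, namely that $\rho$ is a homomorphism and that $\Aut(G)$ acts on $\rho(G)$ by
\[ \rho(\sigma)\rho(\tau)=\rho(\sigma\tau),\qquad \varphi\,\rho(\sigma)\,\varphi^{-1}=\rho(\varphi(\sigma)), \]
both immediate from the definitions. From these I would first record the product and conjugation formulas that will be used throughout:
\[ \rho(\sigma)\Gamma(\sigma)\cdot\rho(\tau)\Gamma(\tau)=\rho\big(\sigma\cdot\Gamma(\sigma)(\tau)\big)\,\Gamma(\sigma)\Gamma(\tau), \]
\[ \varphi\cdot\rho(\sigma)\Gamma(\sigma)\cdot\varphi^{-1}=\rho(\varphi(\sigma))\,\big(\varphi\Gamma(\sigma)\varphi^{-1}\big), \]
\[ \rho(\delta)\cdot\rho(\sigma)\Gamma(\sigma)\cdot\rho(\delta)^{-1}=\rho\big(\delta\sigma\cdot\Gamma(\sigma)(\delta^{-1})\big)\,\Gamma(\sigma). \]
After this, every assertion reduces to reading off $\rho$- and $\Aut$-components and invoking uniqueness of the decomposition.

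For the forward direction I would assume the two stated identities. Putting $\sigma=\tau=1_G$ in $\Gamma(\sigma\tau)=\Gamma(\tau)\Gamma(\sigma)$ gives $\Gamma(1_G)=\mathrm{id}$, whence $\Gamma(\sigma^{-1})=\Gamma(\sigma)^{-1}$. The key closure calculation is
\[ \Gamma\big(\sigma\cdot\Gamma(\sigma)(\tau)\big)=\Gamma(\Gamma(\sigma)(\tau))\,\Gamma(\sigma)=\Gamma(\sigma)\Gamma(\tau)\Gamma(\sigma)^{-1}\,\Gamma(\sigma)=\Gamma(\sigma)\Gamma(\tau), \]
using the anti-homomorphism law and then the equivariance in the form $\Gamma(\Gamma(\sigma)(\tau))=\Gamma(\sigma)\Gamma(\tau)\Gamma(\sigma)^{-1}$; combined with the product formula this gives closure, and together with $\Gamma(1_G)=\mathrm{id}$ and the explicit inverse $\big(\rho(\sigma)\Gamma(\sigma)\big)^{-1}=\rho(\mu)\Gamma(\mu)$ for $\mu=\Gamma(\sigma)^{-1}(\sigma^{-1})$ it makes $N_\Gamma$ a subgroup. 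Normality I would verify on the two generating families: by the $\varphi$-conjugation formula, conjugation by $\varphi$ stays in $N_\Gamma$ exactly because equivariance says $\varphi\Gamma(\sigma)\varphi^{-1}=\Gamma(\varphi(\sigma))$, and by the $\rho(\delta)$-conjugation formula, conjugation by $\rho(\delta)$ stays in $N_\Gamma$ because a short computation of the same flavour gives $\Gamma\big(\delta\sigma\cdot\Gamma(\sigma)(\delta^{-1})\big)=\Gamma(\sigma)$.

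For the converse, assume $N_\Gamma$ is a normal subgroup. Conjugation by $\varphi\in\Aut(G)$ must land in $N_\Gamma$; comparing $\rho$-components in the $\varphi$-conjugation formula pins the parameter to $\varphi(\sigma)$, and comparing $\Aut$-components then yields equivariance, $\varphi\Gamma(\sigma)\varphi^{-1}=\Gamma(\varphi(\sigma))$. Likewise, closure read off the product formula gives the relation
\[ \Gamma\big(\sigma\cdot\Gamma(\sigma)(\tau)\big)=\Gamma(\sigma)\Gamma(\tau), \]
which I will call $(\ast)$. To extract the anti-homomorphism law I would substitute $\tau\mapsto\Gamma(\sigma)^{-1}(\tau)$ in $(\ast)$ and apply the now-available equivariance in the form $\Gamma(\Gamma(\sigma)^{-1}(\tau))=\Gamma(\sigma)^{-1}\Gamma(\tau)\Gamma(\sigma)$, which collapses $(\ast)$ to $\Gamma(\sigma\tau)=\Gamma(\tau)\Gamma(\sigma)$.

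The crux is this final step, and the point to watch is that the two conditions enter asymmetrically: equivariance drops out directly from $\Aut(G)$-conjugation, but the anti-homomorphism law is concealed inside the closure relation $(\ast)$ and appears only after the change of variable together with a use of equivariance. Making sure equivariance is genuinely established before it is used to unpack $(\ast)$ is where the logical bookkeeping, rather than the algebra, needs care.
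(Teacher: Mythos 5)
Your proposal is correct. Note that the paper does not actually prove this proposition: it simply cites \cite[Theorem 5.2]{Caranti2}, so your argument should be compared with that reference rather than with anything in the present text; what you give is a valid self-contained verification of the cited result, carried out by the natural method of unique decomposition in $\Hol(G)=\rho(G)\rtimes\Aut(G)$. All three of your structural formulas (product, conjugation by $\varphi$, conjugation by $\rho(\delta)$) are right, and the two delicate points are handled properly: in the forward direction, the closure identity $\Gamma\bigl(\sigma\cdot\Gamma(\sigma)(\tau)\bigr)=\Gamma(\sigma)\Gamma(\tau)$ and the normality identity $\Gamma\bigl(\delta\sigma\cdot\Gamma(\sigma)(\delta^{-1})\bigr)=\Gamma(\sigma)$ both follow from the antihomomorphism law combined with equivariance applied to the inner parameter $\varphi=\Gamma(\sigma)$; in the converse, you correctly establish equivariance first (from $\Aut(G)$-conjugation and uniqueness of the $\rho(\mu)\psi$ decomposition) before using it, with $\varphi=\Gamma(\sigma)^{-1}$, to convert the closure relation $(\ast)$ into $\Gamma(\sigma\tau)=\Gamma(\tau)\Gamma(\sigma)$ via the substitution $\tau\mapsto\Gamma(\sigma)^{-1}(\tau)$. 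Two cosmetic remarks: the identity element is covered by $\Gamma(1_G)=\mathrm{Id}_G$ as you note, and the parenthetical ``regular'' in the statement is immediate since $\rho(\sigma)\Gamma(\sigma)$ sends $1_G$ to $\sigma^{-1}$, so the evaluation map $N_\Gamma\to G$ is bijective; neither point affects the validity of your argument.
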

\begin{proof}See \cite[Theorem 5.2]{Caranti2}. Note that $\gamma$ is used there instead of $\Gamma$, and we changed the notation because $\gamma$ is to denote something else in Section \ref{char section}.
\end{proof}

\begin{remark}\label{remark1}Note that $\rho(G)$ are $\lambda(G)$ are both normal regular subgroups of $\Hol(G)$. They correspond to the maps from $G$ to $\Aut(G)$ defined by
\[ \Gamma_\rho(\sigma) = \mathrm{Id}_G \mbox{ and }\Gamma_\lambda(\sigma) = \mathrm{conj}(\sigma^{-1}) \mbox{ for all }\sigma\in G,\]
respectively, where $\mathrm{conj}(\cdot) = \rho(\cdot)\lambda(\cdot)$ denotes the inner automorphisms.
\end{remark}

Letting $\Aut(G)$ act on $G$ canonically and on itself by conjugation, we may restate Proposition \ref{Gamma lem} as follows: the set $N_\Gamma$ is a normal subgroup of $\Hol(G)$ if and only if $\Gamma$ is an $\Aut(G)$-equivariant antihomomorphism. We now deduce from the above discussion that:

\begin{cor}\label{T(G) cor} For any finite group $G$, the order of $T(G)$ is the number of $\Aut(G)$-equivariant antihomomorphisms $\Gamma$ from $G$ to $\Aut(G)$ with $N_\Gamma\simeq G$. \end{cor}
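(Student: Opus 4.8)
The plan is to assemble the statement from the three ingredients already recorded above, the only genuine work being a short injectivity check for the parametrization $\Gamma \mapsto N_\Gamma$. First I would invoke the fact, stated just after the definition of $T(G)$, that $T(G)$ acts regularly on the set
\[ S = \{N : N\text{ is a regular subgroup of }\Perm(G)\text{ with }N\simeq G\text{ and }\Norm(N) = \Hol(G)\}. \]
Because the action is regular, $T(G)$ and $S$ have the same cardinality, so it suffices to count $S$. Next I would use the second fact quoted there, namely that for finite $G$ the members of $S$ are exactly the normal regular subgroups $N$ of $\Hol(G)$ with $N\simeq G$; this replaces the problem of counting $S$ by that of counting such normal regular subgroups.

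Now every regular subgroup of $\Hol(G)$ has the shape $N_\Gamma$ for some $\Gamma\in\Map(G,\Aut(G))$, and by Proposition \ref{Gamma lem}, in the equivariant-antihomomorphism reformulation given just before the statement, $N_\Gamma$ is normal in $\Hol(G)$ precisely when $\Gamma$ is an $\Aut(G)$-equivariant antihomomorphism. Imposing the extra requirement $N_\Gamma\simeq G$ then restricts attention to exactly the maps $\Gamma$ counted in the statement. Thus the correspondence $\Gamma\mapsto N_\Gamma$ lands on the normal regular subgroups isomorphic to $G$, and the remaining point is to see that it is a bijection, i.e. that distinct $\Gamma$ give distinct subsets $N_\Gamma$ of $\Perm(G)$.

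The hard part, such as it is, will be this injectivity, and I would settle it directly from regularity by evaluating a generator at the identity:
\[ (\rho(\sigma)\Gamma(\sigma))(1_G) = \rho(\sigma)(\Gamma(\sigma)(1_G)) = \rho(\sigma)(1_G) = \sigma^{-1}, \]
using that $\Gamma(\sigma)\in\Aut(G)$ fixes $1_G$. Hence within $N_\Gamma$ the unique element carrying $1_G$ to $\sigma^{-1}$ is $\rho(\sigma)\Gamma(\sigma)$. If $N_\Gamma = N_{\Gamma'}$, then comparing, for each $\sigma$, the unique elements sending $1_G$ to $\sigma^{-1}$ forces $\rho(\sigma)\Gamma(\sigma) = \rho(\sigma)\Gamma'(\sigma)$, whence $\Gamma(\sigma)=\Gamma'(\sigma)$ for all $\sigma$ and so $\Gamma=\Gamma'$. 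Combining this bijection with the two cardinality identifications yields that $|T(G)|$ equals the number of $\Aut(G)$-equivariant antihomomorphisms $\Gamma$ from $G$ to $\Aut(G)$ with $N_\Gamma\simeq G$. I expect no real obstacle here: the argument is essentially bookkeeping over the previously cited facts, with the evaluation-at-identity computation being the one self-contained step.
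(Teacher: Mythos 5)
Your proposal is correct and follows essentially the same route as the paper, which derives the corollary from the regular action of $T(G)$, the identification of the relevant $N$ with normal regular subgroups of $\Hol(G)$, and Proposition \ref{Gamma lem}; the only addition is your explicit injectivity check for $\Gamma\mapsto N_\Gamma$ via evaluation at $1_G$, which the paper leaves implicit and which you carry out correctly.
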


The purpose of this paper is to study $T(G)$, via these $\Aut(G)$-equivariant antihomomorphisms $\Gamma$, when $G$ is a finite split metacyclic $p$-group with $p$ an odd prime. We may assume that $G$ is non-abelian, because we already know from \cite{Miller} that $T(G)$ is trivial for all abelian groups $G$ of odd order.

\begin{assumption}In the rest of this paper, the symbol $G$ denotes a finite non-abelian split metacyclic $p$-group with $p$ an odd prime. Then, by \cite{King} we know that $G$ has a (unique) presentation of the form
\begin{equation}\label{metacyclic G} G = \langle x,y : x^{p^m} = 1,\,  y^{p^n} = 1,\, yxy^{-1} = x^{1+p^{m-r}}\rangle,\end{equation}
where $m\geq2$, $n\geq1$, and $1\leq r\leq \min\{n,m-1\}$. 
\end{assumption}

Our main result is the following:

\begin{thm}\label{main thm}The order of the quotient $T(G)$ is equal to
\[\begin{cases}
2p^{m-r+\min\{r,n-r\}} &\mbox{when $m\leq n$ with $m-r <r$},\\
(p-1)p^{r-1 + \min\{r,n-r\}} & \mbox{when $m\leq n$ with $r\leq m-r$},\\
(p-1)p^{r-1} &\mbox{when }n\leq m-r.
\end{cases}\]
\end{thm}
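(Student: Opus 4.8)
My plan is to apply Corollary \ref{T(G) cor} and count the $\Aut(G)$-equivariant antihomomorphisms $\Gamma\colon G\to\Aut(G)$ with $N_\Gamma\simeq G$. The first task, which I expect to occupy a preliminary section, is to pin down $\Aut(G)$ for the presentation \eqref{metacyclic G}. Since $\langle x\rangle$ is the distinguished cyclic normal subgroup and $[G,G]=\langle x^{p^{m-r}}\rangle$ has order $p^{r}$, with $Z(G)$ and the Frattini quotient readable directly off the relations, I would first record when $\langle x\rangle$ is characteristic; this is exactly where the regimes $n\ge m$ versus $n\le m-r$ (and the excluded middle range $m-r<n<m$) enter, since they govern the shape of $\Aut(G)$. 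I would then write a general automorphism on generators as $x\mapsto x^{a}y^{c}$, $y\mapsto x^{b}y^{d}$ and determine the admissible tuples $(a,b,c,d)$ modulo the defining relations, obtaining an explicit description of $\Aut(G)$ together with $\mathrm{Inn}(G)=\langle\mathrm{conj}(x),\mathrm{conj}(y)\rangle$.

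Next I would translate the two conditions of Proposition \ref{Gamma lem} into workable constraints. Because $G=\langle x,y\rangle$ and $\Gamma$ is an antihomomorphism, $\Gamma$ is determined by $\Gamma(x),\Gamma(y)\in\Aut(G)$, subject to the relations of \eqref{metacyclic G} read in reverse. The equivariance condition is the decisive one: applying it with $\varphi=\mathrm{conj}(\tau)$ and comparing with the antihomomorphism property yields
\[ \mathrm{conj}(\tau)\,\Gamma(\sigma)\,\mathrm{conj}(\tau)^{-1}=\Gamma(\tau)^{-1}\Gamma(\sigma)\Gamma(\tau), \]
so that $\Gamma(\tau)\,\mathrm{conj}(\tau)$ centralizes the whole image $\Gamma(G)$ for every $\tau$. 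In view of Remark \ref{remark1} this says precisely that $\Gamma$ agrees with $\Gamma_\lambda$ up to a factor landing in $C_{\Aut(G)}(\Gamma(G))$, which I would identify with (a subgroup of) the power automorphisms $x\mapsto x^{u}$ together with the inner automorphisms. Imposing in addition equivariance under a generating set of $\Aut(G)/\mathrm{Inn}(G)$ should then force $\Gamma(x),\Gamma(y)$ to lie in an explicit abelian subgroup $A\le\Aut(G)$ and to satisfy a small system of congruences modulo $p^{m}$ and $p^{n}$.

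The heart of the argument is then a counting problem. Writing $\Gamma(x)$ and $\Gamma(y)$ in coordinates on $A$, the reversed relations $\Gamma(x)^{p^{m}}=\Gamma(y)^{p^{n}}=\mathrm{Id}$ together with the twisted commutator relation $\Gamma(y)^{-1}\Gamma(x)\Gamma(y)=\Gamma(x)^{1+p^{m-r}}$ become congruences whose number of solutions I would evaluate in each regime. I expect the ``$x$-part'' to contribute a unit factor times a power of $p$: in the regime $r\le m-r$ a cyclic unit factor of order $p-1$ survives, giving $(p-1)p^{r-1}$, whereas when $m-r<r$ the relevant unit is pinned to a square root of $1$ in $(\bZ/p^{m}\bZ)^{\times}$, of which there are exactly two since $p$ is odd (consistent with $\lambda(G)$ contributing the distinguished involution of Remark \ref{remark1}), giving the factor $2$ times $p^{m-r}$; and the ``$y$-part'' to contribute $p^{\min\{r,n-r\}}$ when $m\le n$ and to collapse to $1$ when $n\le m-r$, which accounts for the disappearance of the $\min\{r,n-r\}$ term in the third case. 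Assembling the two parts should reproduce the three displayed values.

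Finally I would impose $N_\Gamma\simeq G$ rather than merely $N_\Gamma\trianglelefteq\Hol(G)$. Here the plan is to read off the isomorphism type of $N_\Gamma$ from the data $\Gamma(x),\Gamma(y)$, computing the orders of the generators $\rho(x)\Gamma(x)$ and $\rho(y)\Gamma(y)$ of $N_\Gamma$ and of their commutator, matching them against the metacyclic invariants $m,n,r$ of \eqref{metacyclic G}, and checking that the $\Gamma$ counted above all yield $N_\Gamma\simeq G$ (discarding any that do not). I anticipate the main obstacle to be twofold: obtaining a description of $\Aut(G)$ and of $C_{\Aut(G)}(\Gamma(G))$ clean enough that the equivariance condition reduces to a finite checkable system, and then carrying out the congruence count uniformly across the regimes, in particular understanding why the excluded range $m-r<n<m$ resists a single closed formula, which is what confines the theorem to its three stated cases.
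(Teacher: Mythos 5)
Your proposal follows essentially the same route as the paper: invoke Corollary \ref{T(G) cor}, use the explicit description of $\Aut(G)$ for split metacyclic $p$-groups to pin $\Gamma(x),\Gamma(y)$ into a small abelian subgroup governed by a few integer parameters, convert the antihomomorphism and equivariance conditions into congruences, count solutions in each regime, and finally filter by the criterion $N_\Gamma\simeq G$ read off from a presentation of $N_\Gamma$. Two of your guesses at the fine structure are off in detail though not in outcome: the factor $2$ in the case $m-r<r$ comes from the idempotent condition $b(1-b)\equiv 0\pmod{p^{\max\{2r-m,0\}}}$ (so $b\equiv 0,1$), not from square roots of unity in $(\bZ/p^m\bZ)^\times$, and the case split in the theorem is driven by the tractability of one particular equivariance congruence (Proposition \ref{3 relations prop}(c)) rather than by whether $\langle x\rangle$ is characteristic, which it never is here.
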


We are unable to determine the exact order of $T(G)$ when $m-r<n < m$ because the relevant congruence conditions are too complicated in this case; see the end of Section \ref{char section}. Note that our Theorem \ref{main thm} exhibits another family of groups $G$ for which the order of $T(G)$ is not a power of $2$. It also explains why $T(G)$ has order $18$ for the group $G$ in \cite[Example 3.7]{Tsang squarefree}, which is (\ref{metacyclic G}) with $(p,m,n,r) = (3,3,3,2)$; observe that this group $G$ has nilpotency class $p=3$ and so was not covered by work of \cite{Caranti3} or \cite{Tsang squarefree}.

\vspace{1mm}

Here is an outline of this paper. In Sections \ref{char section} and \ref{iso section}, respectively, we first give an arithmetic characterization of the $\Aut(G)$-equivariant antihomomorphisms $\Gamma$ from $G$ to $\Aut(G)$, and determine the isomorphism class of $N_\Gamma$. In Section \ref{count section}, we prove Theorem \ref{main thm} by counting the number of solutions to certain congruence conditions. Finally in Section \ref{last section}, we shall discuss the actual elements lying in $T(G)$ and compare them with work of \cite{Caranti3} and \cite{Tsang ADM}.

\section{Characterization of equivariant antihomomorphisms}\label{char section}

In this section, we shall give an arithmetic characterization of the $\Aut(G)$-equivariant antihomomorphisms $\Gamma$ from $G$ to $\Aut(G)$. 

\vspace{1mm}

For any $z\in \bZ$ and $\ell\in\bN_{\geq 0}$, let us define
\[ S(z,\ell) = 1 + z + \cdots + z^{\ell-1},\]
with the empty sum representing $0$. For any $i,j\in\bZ$, we then have
\[ (x^iy^j)^{\ell} = x^{iS((1+p^{m-r})^j,\ell)} y^{j\ell}.\]
Below, we record two useful facts, both of which require that $p$ is odd.

\begin{lem}\label{simple lemma}Let $z,\ell,s,t$ be non-negative integers with $s<t$.
\begin{enumerate}[(a)]
\item The integer $z$ satisfies $z^{p^s} \equiv 1\pmod{p^t}$ if and only if $z\equiv 1\pmod{p^{t-s}}$.
\item The exact powers of $p$ dividing $\ell$ and $S(z,\ell)$ are equal if $z\equiv1\pmod{p}$.
\end{enumerate}
\end{lem}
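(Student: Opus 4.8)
The plan is to reduce everything to a single $p$-adic valuation computation and then bootstrap from it. Throughout I would write $v_p(\cdot)$ for the $p$-adic valuation, so that a congruence $a\equiv b\pmod{p^k}$ becomes simply $v_p(a-b)\geq k$. The key step, call it $(\ast)$, is the claim that for $p$ odd and any $z\equiv 1\pmod p$ one has $v_p(z^p-1) = v_p(z-1)+1$ (with the convention $v_p(0)=\infty$ to absorb the trivial case $z=1$). I would prove $(\ast)$ by writing $z = 1 + p^a u$ with $a = v_p(z-1)\geq 1$ and $p\nmid u$, expanding $z^p = (1+p^a u)^p$ by the binomial theorem, and checking that the linear term $p^{a+1}u$ is the unique term of minimal valuation: the quadratic term $\binom{p}{2}p^{2a}u^2$ has valuation $1+2a > a+1$ precisely because $p$ is odd (so that $v_p\binom{p}{2}=1$), and all subsequent terms have even larger valuation. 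The oddness of $p$ is exactly what makes $(\ast)$ hold, and this is the step that must be handled with care; for $p=2$ the statement already fails at $z=3$, so I expect $(\ast)$ to be the main (though routine) obstacle.

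For part (a), iterating $(\ast)$ gives, by induction on $s$, the formula $v_p(z^{p^s}-1) = v_p(z-1)+s$ for every $z\equiv 1\pmod p$ and every $s\geq 0$. Granting this, the forward implication first forces $z\equiv 1\pmod p$: since $t>s$ we have $t\geq 1$, so $z^{p^s}\equiv 1\pmod p$, and Fermat's little theorem gives $z\equiv z^{p^s}\equiv 1\pmod p$. Then the hypothesis $z^{p^s}\equiv 1\pmod{p^t}$ reads $v_p(z-1)+s\geq t$, i.e.\ $z\equiv 1\pmod{p^{t-s}}$. Conversely, if $z\equiv 1\pmod{p^{t-s}}$ then $z\equiv 1\pmod p$ (as $t-s\geq 1$), and the displayed formula yields $v_p(z^{p^s}-1) = v_p(z-1)+s \geq (t-s)+s = t$, which is the desired congruence.

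For part (b), I would first upgrade the valuation formula from prime-power exponents to arbitrary ones: for $z\equiv 1\pmod p$ and $\ell\geq 1$, $v_p(z^\ell-1) = v_p(z-1)+v_p(\ell)$. Writing $\ell = p^a b$ with $p\nmid b$ and setting $w = z^{p^a}$, one has $w\equiv 1\pmod p$ with $v_p(w-1) = v_p(z-1)+a$ by part (a); then $z^\ell-1 = w^b-1 = (w-1)(1+w+\cdots+w^{b-1})$, where the second factor is $\equiv b\not\equiv 0\pmod p$ and so contributes no extra power of $p$, giving $v_p(z^\ell-1) = v_p(w-1) = v_p(z-1)+v_p(\ell)$. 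Finally the identity $(z-1)S(z,\ell) = z^\ell-1$ gives, for $z\neq 1$, that $v_p(S(z,\ell)) = v_p(z^\ell-1)-v_p(z-1) = v_p(\ell)$, which is exactly the assertion that $\ell$ and $S(z,\ell)$ are divisible by the same exact power of $p$. The two degenerate cases are immediate and should be dispatched separately: $S(1,\ell)=\ell$ settles $z=1$, and $S(z,0)=0$ settles $\ell=0$, both sides then being infinite.
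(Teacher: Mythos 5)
Your argument is correct. The paper does not actually prove this lemma---it dismisses part (a) as standard and cites \cite[Lemma 2.1]{Tsang ADM} for part (b)---so there is no in-paper argument to compare against; your self-contained proof via the lifting-the-exponent step $v_p(z^p-1)=v_p(z-1)+1$ for odd $p$ and $z\equiv 1\pmod{p}$, combined with the factorization $(z-1)S(z,\ell)=z^{\ell}-1$, is exactly the standard route, and your treatment of the degenerate cases $z=1$ and $\ell=0$ and of where oddness of $p$ enters the binomial expansion is careful and complete.
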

\begin{proof}Part (a) is standard and part (b) is \cite[Lemma 2.1]{Tsang ADM}.
\end{proof}

Now, it is necessary to understand the structure of $\Aut(G)$, which has been computed in \cite{Aut}. We note in passing that the automorphism group of a split metacyclic $2$-group  is also known by \cite{Aut2}.

\begin{prop}\label{Aut prop} The automorphism group of $G$ has order
\[ (p-1)p^{m-1}\cdot p^{\min\{m,n\}}\cdot p^{\min\{m-r,n\}}\cdot p^{n-r},\]
and is a product of four cyclic subgroups, namely
\[ \Aut(G) = \langle\beta\rangle\langle\gamma\rangle\langle\alpha\rangle\langle\delta\rangle,\]
where $\alpha,\beta,\gamma,\delta$, respectively, are automorphisms of orders
\[ (p-1)p^{m-1},\, p^{\min\{m,n\}},\, p^{\min\{m-r,n\}},\, p^{n-r},\]
and are explicitly defined as follows:
\begin{enumerate}[$\bullet$]
\item $\alpha(x) = x^u$ and $\alpha(y) =y$, where $u$ generates the units modulo $p^m$;
\item $\beta(x) = x$ and $\beta(y) = x^{p^{\max\{m-n,0\}}}y$;
\item $\gamma(x) = x y^{p^{\max\{n-m+r,0\}}}$ and $\gamma(y)=y$;
\item $\delta(x) = x$ and $\delta(y) = y^{1+p^r}$.
\end{enumerate}
Moreover, we have the following relations:
\begin{align}\label{conjugation}
&\alpha\delta=\delta\alpha,\,\
\alpha \beta\alpha^{-1}  = \beta^u,\,\
\delta\beta \delta^{-1} = \beta^{(1+p^r)^{-1}},\,\ 
\delta \gamma\delta^{-1} = \gamma^{1+p^r},\\\label{conjugation'}
&\alpha\gamma\alpha^{-1} = \alpha^{a_0}\gamma^{u^{-1}} \mbox{ with }\alpha^{a_0}\gamma = \gamma\alpha^{a_0},\end{align}
where $a_0$ is any natural number satisfying the congruence
\[\label{w eqn}u^{a_0} \equiv uS((1+p^{m-r})^{p^{\max\{n-m+r,0\}}},u^{-1})\pmmod{p^m},\]
and the $(\cdot)^{-1}$ in the exponents are to be interpreted modulo $p^{\min\{m,n\}}$.
\end{prop}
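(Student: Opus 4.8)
The plan is to compute $\Aut(G)$ directly from the presentation (\ref{metacyclic G}). Since $x$ and $y$ generate $G$, any endomorphism $\varphi$ is determined by $\varphi(x)=x^ay^b$ and $\varphi(y)=x^cy^d$, and $\varphi$ is an automorphism precisely when these images again satisfy the three defining relations and generate $G$. Using the power formula $(x^iy^j)^\ell = x^{iS((1+p^{m-r})^j,\ell)}y^{j\ell}$ together with Lemma \ref{simple lemma}, I would rewrite each relation as an explicit congruence on the quadruple $(a,b,c,d)$: the relations $x^{p^m}=1$ and $y^{p^n}=1$ constrain the admissible $p$-adic valuations of $b$ and $d$, while the conjugation relation $yxy^{-1}=x^{1+p^{m-r}}$ produces the remaining conditions after expanding both sides with the power formula. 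This pins down exactly which quadruples give automorphisms. The order of $\Aut(G)$ and the fact that it is generated by four maps of this shape are already available from \cite{Aut}, so I would invoke that reference for the global count and concentrate the argument on exhibiting the explicit generators $\alpha,\beta,\gamma,\delta$ and on verifying the relations (\ref{conjugation}) and (\ref{conjugation'}), which are the data actually needed in the sequel.

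Next I would check that each of $\alpha,\beta,\gamma,\delta$ is a well-defined automorphism and record its order. Well-definedness is the verification that the prescribed images of $x$ and $y$ respect the three relations; this is immediate for the two order relations and, for the conjugation relation $yxy^{-1}=x^{1+p^{m-r}}$, reduces to a geometric-series congruence of the type governed by Lemma \ref{simple lemma}(b). The orders then follow from Lemma \ref{simple lemma}(a): the order of $\alpha$ equals the multiplicative order $(p-1)p^{m-1}$ of the primitive root $u$ modulo $p^m$, and since $\beta^k(y)=x^{kp^{\max\{m-n,0\}}}y$, $\gamma^k(x)=xy^{kp^{\max\{n-m+r,0\}}}$ and $\delta^k(y)=y^{(1+p^r)^k}$, the orders of $\beta,\gamma,\delta$ come out to $p^{\min\{m,n\}}$, $p^{\min\{m-r,n\}}$ and $p^{n-r}$ respectively. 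As the product of these four orders agrees with the order of $\Aut(G)$ supplied by \cite{Aut}, to obtain the factorization $\Aut(G)=\langle\beta\rangle\langle\gamma\rangle\langle\alpha\rangle\langle\delta\rangle$ it then suffices to show that every automorphism admits such an expression; I would do this by normalizing an arbitrary $\varphi$ one generator at a time, using $\delta$ and $\beta$ to correct the image of $y$ and then $\alpha$ and $\gamma$ to correct the image of $x$, with the order count forcing uniqueness of the decomposition.

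Finally I would verify the relations by evaluating both sides on $x$ and $y$. Most are short: $\alpha$ and $\delta$ commute because they act on disjoint generators, while $\alpha\beta\alpha^{-1}=\beta^u$ and $\delta\gamma\delta^{-1}=\gamma^{1+p^r}$ fall out of the multiplicativity of the exponents, and $\delta\beta\delta^{-1}=\beta^{(1+p^r)^{-1}}$ reduces to a single valuation estimate handled by Lemma \ref{simple lemma}(b). I expect the main obstacle to be the last relation (\ref{conjugation'}). Conjugating the shear $\gamma$ by the scaling $\alpha$ does not return a pure power of $\gamma$: since $\alpha^{-1}$ first raises $x$ to the $u^{-1}$ power, expanding $(xy^{p^{\max\{n-m+r,0\}}})^{u^{-1}}$ via the power formula introduces the geometric-series factor $S((1+p^{m-r})^{p^{\max\{n-m+r,0\}}},u^{-1})$ in the $x$-exponent. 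Comparing $x$-exponents shows the discrepancy is absorbed by a power $\alpha^{a_0}$, with $a_0$ forced to satisfy the displayed congruence $u^{a_0}\equiv uS((1+p^{m-r})^{p^{\max\{n-m+r,0\}}},u^{-1})\pmod{p^m}$. The delicate point is that this same congruence must then guarantee $\alpha^{a_0}\gamma=\gamma\alpha^{a_0}$: reducing it modulo $p^{m-r}$ gives $u^{a_0}\equiv 1 \pmod{p^{m-r}}$, and one has to track that this, combined with $\max\{n-m+r,0\}+(m-r)\geq n$, yields the divisibility $p^{\max\{n-m+r,0\}}(u^{a_0}-1)\equiv 0\pmod{p^n}$ (and its companion on the $x$-exponent) that is equivalent to the commuting. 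This $p$-adic bookkeeping, all of it powered by Lemma \ref{simple lemma}, is the crux of the proof.
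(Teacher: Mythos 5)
Your proposal is sound in outline, but it takes a genuinely different route from the paper: the paper's entire proof of this proposition is the citation ``See \cite[Sections 3 and 4]{Aut} and its corrigendum,'' i.e.\ everything --- the order, the generators, the factorization, and the relations (\ref{conjugation})--(\ref{conjugation'}) --- is imported from Bidwell--Curran. You instead propose to verify almost all of it directly from the presentation (\ref{metacyclic G}), importing only the order of $\Aut(G)$ from \cite{Aut}. The trade-off is clear: the paper's approach is a one-line outsourcing, while yours makes the result largely self-contained and, importantly, independently checks the conjugation relations, which are exactly the data used throughout Sections \ref{char section}--\ref{count section} and are the part of \cite{Aut} that required a corrigendum. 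Your plan correctly isolates the routine parts (well-definedness of $\alpha,\beta,\gamma,\delta$ and their orders via Lemma \ref{simple lemma}, the easy relations by evaluation on $x$ and $y$) and the genuinely delicate part: that $\alpha\gamma\alpha^{-1}$ picks up the factor $\alpha^{a_0}$ with $u^{a_0}\equiv uS((1+p^{m-r})^{q},u^{-1})\pmod{p^m}$ for $q=p^{\max\{n-m+r,0\}}$, and that this same congruence forces $u^{a_0}\equiv 1\pmod{p^{\min\{m,\max\{n,m-r\}\}}}$ and hence $\alpha^{a_0}\gamma=\gamma\alpha^{a_0}$; your valuation bookkeeping there (using $\max\{n-m+r,0\}+(m-r)=\max\{n,m-r\}\geq n$) is correct. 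Two caveats: your argument still leans on \cite{Aut} for $\lvert\Aut(G)\rvert$, which is the hardest ingredient and is needed to upgrade ``every automorphism can be normalized into the form $\beta^b\gamma^c\alpha^a\delta^d$'' to the exact factorization with unique exponents; and the normalization step itself needs care about the order in which you correct the images of $x$ and $y$, since the four cyclic factors do not commute. As written this is a plan rather than a finished verification, but every step you list is feasible and none would fail.
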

\begin{proof}See \cite[Sections 3 and 4]{Aut} and its corrigendum.\end{proof}

Proposition \ref{Aut prop} implies that elements of $\Aut(G)$ may be written as
\[ \beta^b\gamma^c\alpha^a\delta^d, \mbox{ where }(a,b,c,d)\in\bZ\times\bZ\times\bZ\times\bZ\]
is uniquely determined modulo
\[ (p-1)p^{m-1}\bZ \times p^{\min\{m,n\}}\bZ \times p^{\min\{m-r,n\}}\bZ\times p^{n-r}\bZ.\]
Since an antihomomorphism $\Gamma$ from $G$ to $\Aut(G)$ is uniquely determined by the values of $\Gamma(x)$ and $\Gamma(y)$, it is also clear that:

\begin{lem}\label{reduction lem}For any $\psi_x,\psi_y\in \Aut(G)$, the assignments
\[\Gamma(x)=\psi_x\mbox{ and }\Gamma(y)=\psi_y\]
extend to an antihomomorphism $\Gamma$ from $G$ to $\Aut(G)$ if and only if 
\[ \psi_x^{p^m} = \mathrm{Id}_G,\,\  \psi_y^{p^n} = \mathrm{Id}_G,\,\ \psi_y^{-1}\psi_x\psi_y = \psi_x^{1+p^{m-r}}.\]
Moreover, in this case $\Gamma$ is $\Aut(G)$-equivariant exactly when
\begin{equation}\label{8 relations} \Gamma(\varphi(x)) = \varphi\Gamma(x)\varphi^{-1}\mbox{ and }\Gamma(\varphi(y))=\varphi\Gamma(y)\varphi^{-1}\end{equation}
are both satisfied for $\varphi$ ranging over the generators $\alpha,\beta,\gamma,\delta$.
\end{lem}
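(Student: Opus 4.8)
The plan is to establish the two assertions of the lemma separately, each by a routine generators-and-relations argument.

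For the first assertion, I would use the elementary observation that specifying an antihomomorphism $\Gamma\colon G\to\Aut(G)$ is the same as specifying an ordinary homomorphism $G\to\Aut(G)^{\mathrm{op}}$ into the opposite group, since the defining condition $\Gamma(\sigma\tau)=\Gamma(\tau)\Gamma(\sigma)$ is exactly multiplicativity with respect to the reversed product. By the universal property of the presentation (\ref{metacyclic G}), such a homomorphism exists and is unique once $\psi_x=\Gamma(x)$ and $\psi_y=\Gamma(y)$ are prescribed, provided their values kill each of the three defining relators of $G$ when evaluated in $\Aut(G)^{\mathrm{op}}$. For the two power relators this is immediate, because powers of a single element agree in a group and its opposite, so we recover $\psi_x^{p^m}=\mathrm{Id}_G$ and $\psi_y^{p^n}=\mathrm{Id}_G$. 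The only relator requiring attention is $yxy^{-1}x^{-(1+p^{m-r})}$: evaluating it under the reversed product (equivalently, applying the antihomomorphism property directly) reverses the order of composition and produces $\psi_x^{-(1+p^{m-r})}\psi_y^{-1}\psi_x\psi_y$, so that killing this relator is precisely the condition $\psi_y^{-1}\psi_x\psi_y=\psi_x^{1+p^{m-r}}$. This gives the three stated conditions.

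For the second assertion I would argue in two reduction steps. First, fixing $\varphi\in\Aut(G)$, I note that both $\sigma\mapsto\Gamma(\varphi(\sigma))$ and $\sigma\mapsto\varphi\Gamma(\sigma)\varphi^{-1}$ are antihomomorphisms from $G$ to $\Aut(G)$: the former because $\varphi$ is a homomorphism while $\Gamma$ reverses products, and the latter because conjugation by $\varphi$ is an automorphism of $\Aut(G)$. Since an antihomomorphism is determined by its values on the generators $x,y$, these two antihomomorphisms coincide on all of $G$ as soon as they agree at $x$ and at $y$. Hence the full equivariance relation $\Gamma(\varphi(\sigma))=\varphi\Gamma(\sigma)\varphi^{-1}$ for every $\sigma\in G$ is equivalent, for this fixed $\varphi$, to the two identities in (\ref{8 relations}).

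Second, I would let $H$ denote the set of all $\varphi\in\Aut(G)$ for which equivariance holds for every $\sigma\in G$, and check that $H$ is a subgroup. It contains $\mathrm{Id}_G$; it is closed under composition by a direct substitution; and it is closed under inversion by applying the equivariance relation to $\varphi^{-1}(\sigma)$ in place of $\sigma$ and rearranging. Because $\Aut(G)=\langle\alpha,\beta,\gamma,\delta\rangle$ by Proposition \ref{Aut prop}, it follows that $H=\Aut(G)$ as soon as $\alpha,\beta,\gamma,\delta\in H$. Combined with the first reduction step, this shows that $\Gamma$ is $\Aut(G)$-equivariant if and only if (\ref{8 relations}) holds for $\varphi$ ranging over these four generators. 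I do not anticipate any genuine obstacle: the proof is a clean pair of reductions, and the only points that demand care are the reversal of the conjugation relation when passing to the opposite group and the verification of closure of $H$ under inverses, both of which are routine once the setup is in place.
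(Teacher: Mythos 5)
Your proof is correct, and since the paper states this lemma without proof (introducing it with ``it is also clear that''), your argument simply supplies the standard justification the author had in mind: the universal property of the presentation applied to $\Aut(G)^{\mathrm{op}}$ for the first assertion, and the observation that both sides of the equivariance relation are antihomomorphisms determined on generators, together with the fact that the $\varphi$ satisfying equivariance form a subgroup containing $\alpha,\beta,\gamma,\delta$, for the second. No gaps.
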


The problem is then reduced to determining which of the $\psi_x,\psi_y\in\Aut(G)$ satisfy the conditions in Lemma \ref{reduction lem}. But $\psi_x$ and $\psi_y$ are each determined by four parameters, one for each of the generators $\alpha,\beta,\gamma,\delta$, so approaching this directly could lead to some very complicated calculations. To overcome this issue, we shall first significantly narrow down the possibilities for $\psi_x,\psi_y$, and show that they only require one and two parameters, respectively.

\begin{prop}\label{abd prop} Every $\Aut(G)$-equivariant antihomomorphism $\Gamma$ from $G$ to $\Aut(G)$ satisfies the containments
\begin{enumerate}[(a)]
\item $\Gamma(y) \in\langle\alpha^{(p-1)p^{m-r-1}}\rangle\times\langle\delta^{p^{\max\{n-2r,0\}}}\rangle$;
\item $\Gamma(x)\in\langle\beta^{p^{\min\{m,n\}-r}}\rangle$.
\end{enumerate}
\end{prop}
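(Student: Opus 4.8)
The plan is to turn the equivariance condition of Lemma~\ref{reduction lem} into eight concrete relations in $\Aut(G)$ and then to read off, one generator at a time, the constraints they force on $\psi_x:=\Gamma(x)$ and $\psi_y:=\Gamma(y)$. Expanding $\Gamma(\varphi(x))=\varphi\Gamma(x)\varphi^{-1}$ and $\Gamma(\varphi(y))=\varphi\Gamma(y)\varphi^{-1}$ for $\varphi\in\{\alpha,\beta,\gamma,\delta\}$, using that $\Gamma$ is an antihomomorphism together with the formulas for $\alpha,\beta,\gamma,\delta$ in Proposition~\ref{Aut prop}, one finds that $\psi_x$ commutes with $\beta$ and with $\delta$ and obeys $\alpha\psi_x\alpha^{-1}=\psi_x^u$, while $\psi_y$ commutes with $\alpha$ and with $\gamma$ and obeys $\delta\psi_y\delta^{-1}=\psi_y^{1+p^r}$; the two remaining relations couple $\psi_x$ and $\psi_y$ and will not be needed here. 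I will always write elements in the normal form $\beta^b\gamma^c\alpha^a\delta^d$, and I will use that $\psi_x^{p^m}=\psi_y^{p^n}=\mathrm{Id}_G$ makes both $p$-elements.

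For part (a) I would start from commutativity with $\alpha$. Conjugating $\psi_y=\beta^{b_2}\gamma^{c_2}\alpha^{a_2}\delta^{d_2}$ by $\alpha$ and using $\alpha\gamma\alpha^{-1}=\alpha^{a_0}\gamma^{u^{-1}}$ with $\alpha^{a_0}$ centralizing $\gamma$, the term $\alpha^{a_0c_2}$ slides past $\gamma^{u^{-1}c_2}$, giving $\alpha\psi_y\alpha^{-1}=\beta^{ub_2}\gamma^{u^{-1}c_2}\alpha^{a_0c_2+a_2}\delta^{d_2}$. Matching normal forms forces $ub_2\equiv b_2$ and $u^{-1}c_2\equiv c_2$; as $u-1$ and $u^{-1}-1$ are prime to $p$, this yields $\beta^{b_2}=\gamma^{c_2}=\mathrm{Id}_G$, so $\psi_y\in\langle\alpha\rangle\times\langle\delta\rangle$. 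Since $\delta$ centralizes both $\alpha$ and $\delta$, the relation $\delta\psi_y\delta^{-1}=\psi_y^{1+p^r}$ collapses to $\psi_y^{p^r}=\mathrm{Id}_G$. Writing $\psi_y=\alpha^{a_2}\delta^{d_2}$ in the direct product, the orders $(p-1)p^{m-1}$ of $\alpha$ and $p^{n-r}$ of $\delta$ convert $\psi_y^{p^r}=\mathrm{Id}_G$ into $a_2\equiv0\pmod{(p-1)p^{m-r-1}}$ and $d_2\equiv0\pmod{p^{\max\{n-2r,0\}}}$, which is exactly the asserted containment.

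For part (b) I would begin with the $\delta$-relation. Conjugation by $\delta$ sends $\beta^{b_1}\gamma^{c_1}\alpha^{a_1}\delta^{d_1}$ to $\beta^{(1+p^r)^{-1}b_1}\gamma^{(1+p^r)c_1}\alpha^{a_1}\delta^{d_1}$, so commutativity with $\delta$ forces $p^rb_1\equiv0\pmod{p^{\min\{m,n\}}}$ and $p^rc_1\equiv0\pmod{p^{\min\{m-r,n\}}}$; the first is exactly $b_1\equiv0\pmod{p^{\min\{m,n\}-r}}$, the desired statement about the $\beta$-component, and the second confines $c_1$. It remains to kill the $\gamma$-, $\alpha$-, and $\delta$-components. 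To kill the $\delta$-component, note that $\langle\beta,\gamma,\alpha\rangle$ is normal, as conjugation by $\delta$ preserves it by the relations of Proposition~\ref{Aut prop}, with quotient $\langle\overline\delta\rangle\cong\bZ/p^{n-r}$; projecting $\alpha\psi_x\alpha^{-1}=\psi_x^u$ there gives $\overline\delta^{\,(u-1)d_1}=\mathrm{Id}_G$, and since $u-1$ is prime to $p$ while $\overline\delta^{\,d_1}$ is a $p$-element, $\delta^{d_1}=\mathrm{Id}_G$. With $d_1=0$, conjugating by $\beta$ gives $\beta\psi_x\beta^{-1}=\beta^{b_1+1-u^{a_1}}\gamma^{c_1}\alpha^{a_1}$, so commutativity with $\beta$ reads $u^{a_1}\equiv1\pmod{p^{\min\{m,n\}}}$; as $u$ has order $(p-1)p^{\min\{m,n\}-1}$ modulo $p^{\min\{m,n\}}$, this pins $a_1$ down and, when $m\le n$, forces $\alpha^{a_1}=\mathrm{Id}_G$. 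Finally, with $a_1=d_1=0$, comparing $\alpha\psi_x\alpha^{-1}=\beta^{ub_1}\gamma^{u^{-1}c_1}\alpha^{a_0c_1}$ with $\psi_x^u=\beta^{ub_1}\gamma^{uc_1}$ forces $\alpha^{a_0c_1}=\mathrm{Id}_G$ and $(u-u^{-1})c_1\equiv0\pmod{p^{\min\{m-r,n\}}}$, so $\gamma^{c_1}=\mathrm{Id}_G$ and $\psi_x=\beta^{b_1}\in\langle\beta^{p^{\min\{m,n\}-r}}\rangle$.

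The main obstacle lies in this last stretch, where every cancellation depends on $u-1$, $u^{-1}-1$, and $u-u^{-1}$ being prime to $p$, and on computing conjugates and the $u$-th power $\psi_x^u$ in the non-abelian $\Aut(G)$ in normal form. These shortcuts degenerate for small primes—most visibly when $p=3$, where $u\equiv-1$ makes $u-u^{-1}$ divisible by $p$, so the final step killing $\gamma^{c_1}$ must be replaced by the congruence on $c_1$ already extracted from the $\delta$-relation. More seriously, the $\beta$-relation only confines $a_1$ to $\langle\alpha^{(p-1)p^{\min\{m,n\}-1}}\rangle$, which is trivial precisely when $m\le n$; for $m>n$ one cannot kill the $\alpha$-component this way, and must instead invoke the coupling relations $\beta\psi_y\beta^{-1}=\psi_y\psi_x^{p^{\max\{m-n,0\}}}$ and $\gamma\psi_x\gamma^{-1}=\psi_y^{p^{\max\{n-m+r,0\}}}\psi_x$ together with part (a) and a full normal-form evaluation of $\psi_x^u$. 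Handling these remaining cases uniformly, and verifying that the exceptional primes introduce no extra solutions, is where the genuine effort is concentrated.
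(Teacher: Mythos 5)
Your part (a) is correct and is essentially the paper's argument: the relation $\alpha\Gamma(y)\alpha^{-1}=\Gamma(y)$ kills the $\beta$- and $\gamma$-components because $u-1$ and $u^{-1}-1$ are prime to $p$, and the $\delta$-relation then reduces to $\Gamma(y)^{p^r}=\mathrm{Id}_G$ inside $\langle\alpha\rangle\times\langle\delta\rangle$. Part (b), however, has genuine gaps. The central one is that your normal-form manipulations of $\beta\psi_x\beta^{-1}$ and of $\psi_x^u$ silently assume that $\beta$ and $\gamma$ commute. Proposition \ref{Aut prop} provides no relation for $[\beta,\gamma]$, and in fact $\beta\gamma\neq\gamma\beta$ in general: for $(p,m,n,r)=(3,2,2,1)$ one computes $\beta\gamma(x)=x(xy)^3=x^4y^3$ while $\gamma\beta(x)=xy^3$. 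So expressions such as $\beta^{b_1+1-u^{a_1}}\gamma^{c_1}\alpha^{a_1}$ and $\psi_x^u=\beta^{ub_1}\gamma^{uc_1}$ are unjustified, and the steps that extract $u^{a_1}\equiv1$ and kill $\gamma^{c_1}$ collapse. The paper sidesteps this entirely: for $m\le n$ it uses $\beta(y)=xy$ to get $\Gamma(x)=\Gamma(y)^{-1}\beta\Gamma(y)\cdot\beta^{-1}$, which by part (a) and (\ref{conjugation}) is visibly a power of $\beta$; for $m>n$ it derives $\gamma^c=\beta\gamma^c\beta^{-u^a(1+p^r)^{-d}}$ and then \emph{evaluates both sides at $x$}, comparing only the $\langle x\rangle$-components, so no commutator of $\beta$ with $\gamma$ is ever needed. (Your quotient-by-$\langle\beta,\gamma,\alpha\rangle$ step for killing $\delta^{d_1}$ also tacitly needs $\langle\beta,\gamma,\alpha\rangle\cap\langle\delta\rangle=1$, which is not immediate from Proposition \ref{Aut prop}; the paper instead first proves $\gamma^c=\mathrm{Id}_G$ and then works inside $\langle\beta\rangle\rtimes(\langle\alpha\rangle\times\langle\delta\rangle)$, where projection onto $\langle\alpha\rangle\times\langle\delta\rangle$ is an honest homomorphism.)

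Two further points. You explicitly leave the case $m>n$ unproved ("one must instead invoke the coupling relations"), but that is precisely the hard half of (b). The paper finishes it by projecting the two relations $\Gamma(x)^{p^m}=\mathrm{Id}_G$ and $\Gamma(x)^u=\alpha\Gamma(x)\alpha^{-1}$ onto $\langle\alpha\rangle\times\langle\delta\rangle$: the second kills $\delta^d$ and gives $a\equiv0\pmod{p^{m-1}}$, while the first — the antihomomorphism relation coming from $x^{p^m}=1$, which your sketch never uses — supplies the missing $a\equiv0\pmod{p-1}$ needed to conclude $\alpha^a=\mathrm{Id}_G$. Finally, your fallback for $p=3$ (where $u\equiv-1$ makes $u-u^{-1}$ divisible by $p$) does not close the gap: the $\delta$-relation only yields $c_1\equiv0\pmod{p^{\min\{m-r,n\}-r}}$, which is weaker by a factor of $p^{r}$ than the required $\gamma^{c_1}=\mathrm{Id}_G$.
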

\begin{proof}[Proof of (a)] Write $\Gamma(y) = \beta^b\gamma^c\alpha^a\delta^d$ with $a,b,c,d\in\bN$. Observe that
\[ \Gamma(y)^{1+p^r} = \Gamma(y^{1+p^r}) = \Gamma(\delta(y)) = \delta\Gamma(y)\delta^{-1}.\]
Assuming that $\beta^b = \gamma^c = \mathrm{Id}_G$, by (\ref{conjugation}) this simplifies to
\[ \alpha^{ap^r}\delta^{dp^r} = \mathrm{Id}_G,\mbox{ and so }\begin{cases}
a\equiv0\pmmod{(p-1)p^{m-r-1}},\\ d\equiv0 \pmmod{p^{\max\{n-2r,0\}}}.
\end{cases}\]
It remains to show that indeed $\beta^b = \gamma^c = \mathrm{Id}_G$. We shall use the relation
\[ \Gamma(y) = \Gamma(\alpha(y)) = \alpha\Gamma(y)\alpha^{-1}.\]
Using (\ref{conjugation}) and (\ref{conjugation'}), we may rewrite the above as
\[ \beta^{b(u-1)}\gamma^{c(u^{-1}-1)}\alpha^{ca_0} = \mathrm{Id}_G.\]
Note that $u-1$ and $u^{-1}-1$ are coprime to $p$ by Lemma \ref{simple lemma}(a). Since $\beta$ and $\gamma$ have orders a power of $p$, we see that $\beta^b = \gamma^c = \mathrm{Id}_G$, as desired.
\end{proof}

\begin{proof}[Proof of (b)] Write $\Gamma(x) = \beta^b\gamma^c\alpha^a\delta^d$ with $a,b,c,d\in\bN$. Observe that
\[ \Gamma(x) = \Gamma(\delta(x)) = \delta\Gamma(x)\delta^{-1}.\]
Assuming that $\gamma^c = \alpha^a = \delta^d = \mathrm{Id}_G$, by (\ref{conjugation}) this simplifies to
\[ \beta^{bp^r}= \mathrm{Id}_G,\mbox{ and so }b\equiv0\pmmod{p^{\min\{m,n\}-r}}.\]
Hence, it remains to show that indeed $\gamma^c = \alpha^a = \delta^d = \mathrm{Id}_G$.
\vspace{1mm}

For $m\leq n$, we have $\beta(y) = xy$, whence
\[ \Gamma(y)\Gamma(x) = \Gamma(\beta(y)) = \beta\Gamma(y)\beta^{-1}, \mbox{ and so } \Gamma(x) = \Gamma(y)^{-1}\beta\Gamma(y)\cdot\beta^{-1}.\] 
Since $\Gamma(y)$ lies in $\langle\alpha\rangle\times\langle\delta\rangle$ by (a), from (\ref{conjugation}) we see that $\langle\beta\rangle$ is normalized by $\Gamma(y)$, so then $\Gamma(x)$ is a power of $\beta$. Thus, we have $\gamma^c = \alpha^a = \delta^d = \mathrm{Id}_G$.

\vspace{1mm}

For $m>n$, we shall first use the relation
\[ \Gamma(x) = \Gamma(\beta(x)) = \beta\Gamma(x)\beta^{-1}.\]
Using (\ref{conjugation}), we may rewrite the above as
\[ \gamma^c = \beta\gamma^c\beta^{-u^a(1+p^r)^{-d}}.\]
In particular, we have the equality
\[ \gamma^c(x) = (\beta\gamma^c\beta^{-u^a(1+p^r)^{-d}})(x)  = (\beta\gamma^c)(x).\] 
By comparing the $\langle x\rangle$-components, we see that
\[ x = x^{1+p^{m-n}S(1+p^{m-r},cp^{\max\{n-m+r,0\}})},\mbox{ and so }c\equiv 0\pmmod{p^{\min\{m-r,n\}}}\]
in view of Lemma \ref{simple lemma}(b).  This shows that $\gamma^c = \mathrm{Id}_G$, and so in fact $\Gamma(x)$ lies in the subgroup $\langle\beta\rangle\rtimes (\langle\alpha\rangle\times\langle\delta\rangle)$.  Next, we consider the relations
\[\Gamma(x)^{p^m} = \mathrm{Id}_G\mbox{ and }
\Gamma(x)^u = \Gamma(x^u) = \Gamma(\alpha(x)) = \alpha\Gamma(x)\alpha^{-1}.\]
By projecting them onto $\langle\alpha\rangle\times\langle\delta\rangle$, we obtain
\[ \alpha^{ap^m}\delta^{dp^m} = \mathrm{Id}_G\mbox{ and }\alpha^{a(u-1)}\delta^{d(u-1)} = \mathrm{Id}_G.\]
Again, we know from Lemma \ref{simple lemma}(a) that $u-1$ is coprime to $p$. The second equation then yields $\delta^{d} = \mathrm{Id}_G$ because $\delta$ has order a power of $p$. From these two equalities, respectively, we also see that
\[a \equiv 0\pmmod{p-1} \mbox{ and }a\equiv0\pmmod{p^{m-1}}.\]
It follows that $\alpha^a = \mathrm{Id}_G$ as well, and this proves the claim.
\end{proof}

In view of Proposition \ref{abd prop}, let us introduce some notation. Put
\[ \widetilde{\alpha} = \alpha^{(p-1)p^{m-r-1}},\,\ \widetilde{\beta} = \beta^{p^{\min\{m,n\}-r}},\,\ \widetilde{\delta} = \delta^{p^{\max\{n-2r,0\}}},\]
and observe that their orders in $\Aut(G)$ are given by
\begin{equation}\label{abd orders}
 \lvert\widetilde{\alpha}\rvert = p^r,\,\ \vert\widetilde{\beta}\rvert = p^r, \,\ \lvert\widetilde{\delta}\rvert = p^{\min\{r,n-r\}}.\end{equation}
Further define the integers
\[ \widetilde{u} = u^{(p-1)p^{m-r-1}},\,\ \widetilde{v} = (1+p^r)^{p^{\max\{n-2r,0\}}}.\]
Then, it follows from the definition that
\[ \widetilde{\alpha}(x) = x^{\widetilde{u}},\,\ \widetilde{\beta}(y) = x^{p^{m-r}}y,\,\ \widetilde{\delta}(y) = y^{\widetilde{v}},\]
and by Lemma \ref{simple lemma}(a), we have
 \begin{equation}\label{uv divisible}p^{m-r}\parallel\widetilde{u}-1,\, 
 p^{\max\{r,n-r\}}\parallel\widetilde{v}-1.\end{equation}
For convenience, we also make the following definition.

\begin{definition}For any $a,b,d\in\bZ$, define
\[ \Gamma_{a,b,d}(x) = \widetilde{\beta}^b \mbox{ and }\Gamma_{a,b,d}(y) = \widetilde{\alpha}^a\widetilde{\delta}^d.\]
The triplet $(a,b,d)$ is called \emph{pre-admissible} if $\Gamma_{a,b,d}$ extends to an antihomomorphism from $G$ to $\Aut(G)$, and \emph{admissible} if $\Gamma_{a,b,d}$ is $\Aut(G)$-equivariant in addition.
\end{definition}

\begin{remark}\label{remark2}Recall Remark \ref{remark1}. It is not hard to see that
\[ \Gamma_\rho = \Gamma_{0,0,0} \mbox{ and }\Gamma_{\lambda} = \Gamma_{a_\lambda,1,0},\]
where $a_\lambda\in \bN$ is such that $\widetilde{u}^{a_\lambda} \equiv (1 + p^{m-r})^{-1}\pmod{p^m}$.\end{remark}

The $\Aut(G)$-equivariant antihomomorphisms $\Gamma$ from $G$ to $\Aut(G)$ are thus precisely the maps $\Gamma_{a,b,d}$ for $(a,b,d)$ ranging over all admissible triplets. We shall now characterize admissibility in terms of congruence conditions.

\begin{prop}\label{pre prop}A triplet $(a,b,d)$ is pre-admissible if and only if
\[ b\widetilde{u}^{-a} \equiv b(1+p^{m-r})\pmmod{p^r}.\]
\end{prop}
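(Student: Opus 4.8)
The plan is to feed $\psi_x = \Gamma_{a,b,d}(x) = \widetilde\beta^b$ and $\psi_y = \Gamma_{a,b,d}(y) = \widetilde\alpha^a\widetilde\delta^d$ directly into Lemma \ref{reduction lem}, so that pre-admissibility becomes the verification of the three relations $\psi_x^{p^m} = \mathrm{Id}_G$, $\psi_y^{p^n} = \mathrm{Id}_G$, and $\psi_y^{-1}\psi_x\psi_y = \psi_x^{1+p^{m-r}}$. I expect the first two to hold automatically and the third to carry all the content.

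For the two order relations I would use the orders recorded in (\ref{abd orders}) together with the standing constraint $r \leq \min\{n,m-1\}$. Since $\lvert\widetilde\beta\rvert = p^r$ divides $p^m$, the relation $\psi_x^{p^m} = \widetilde\beta^{bp^m} = \mathrm{Id}_G$ holds for every $b$. For $\psi_y$, the relation $\alpha\delta = \delta\alpha$ in (\ref{conjugation}) shows that $\widetilde\alpha$ and $\widetilde\delta$ commute, so $\psi_y^{p^n} = \widetilde\alpha^{ap^n}\widetilde\delta^{dp^n}$; as $\lvert\widetilde\alpha\rvert = p^r$ and $\lvert\widetilde\delta\rvert = p^{\min\{r,n-r\}}$ both divide $p^n$, this equals $\mathrm{Id}_G$ as well. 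Hence these two relations impose no condition on $(a,b,d)$, and everything reduces to the conjugation relation.

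The main step is therefore to compute how $\psi_y = \widetilde\alpha^a\widetilde\delta^d$ conjugates $\widetilde\beta$. The relations $\alpha\beta\alpha^{-1} = \beta^u$ and $\delta\beta\delta^{-1} = \beta^{(1+p^r)^{-1}}$ in (\ref{conjugation}), upon raising to the appropriate powers, give $\widetilde\alpha\widetilde\beta\widetilde\alpha^{-1} = \widetilde\beta^{\widetilde u}$ and $\widetilde\delta\widetilde\beta\widetilde\delta^{-1} = \widetilde\beta^{\widetilde v^{-1}}$, where $\widetilde u = u^{(p-1)p^{m-r-1}}$ and $\widetilde v = (1+p^r)^{p^{\max\{n-2r,0\}}}$. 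In particular $\langle\widetilde\beta\rangle$ is normalized by both $\widetilde\alpha$ and $\widetilde\delta$, and using their commutativity I would obtain
\[ \psi_y^{-1}\widetilde\beta^b\psi_y = \widetilde\delta^{-d}\widetilde\alpha^{-a}\widetilde\beta^b\widetilde\alpha^a\widetilde\delta^d = \widetilde\beta^{b\widetilde u^{-a}\widetilde v^d}. \]
Comparing with $\psi_x^{1+p^{m-r}} = \widetilde\beta^{b(1+p^{m-r})}$ and recalling $\lvert\widetilde\beta\rvert = p^r$, the conjugation relation becomes the congruence $b\widetilde u^{-a}\widetilde v^d \equiv b(1+p^{m-r}) \pmod{p^r}$.

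The final step, which is where the parameter $d$ drops out, is the observation that $\widetilde v \equiv 1 \pmod{p^r}$: since $1+p^r \equiv 1 \pmod{p^r}$, every power of $1+p^r$, in particular $\widetilde v$, is $\equiv 1 \pmod{p^r}$, whence $\widetilde v^d \equiv 1 \pmod{p^r}$. Substituting this reduces the congruence to $b\widetilde u^{-a} \equiv b(1+p^{m-r}) \pmod{p^r}$, which is exactly the asserted characterization of pre-admissibility. I do not anticipate a genuine obstacle; the only points requiring care are keeping the direction of conjugation straight (so that the inverse exponent $\widetilde u^{-a}$ appears correctly) and confirming that the factor $\widetilde v^d$ is trivial modulo $p^r$, not merely modulo some smaller power of $p$.
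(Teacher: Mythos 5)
Your proposal is correct and follows essentially the same route as the paper's proof: reduce to the three relations of Lemma \ref{reduction lem}, note that the two order relations hold automatically by (\ref{abd orders}), compute $(\widetilde{\alpha}^a\widetilde{\delta}^d)^{-1}\widetilde{\beta}^{b}(\widetilde{\alpha}^a\widetilde{\delta}^d) = \widetilde{\beta}^{b\widetilde{u}^{-a}\widetilde{v}^d}$ from (\ref{conjugation}), and discard $\widetilde{v}^d$ using $\widetilde{v}\equiv 1\pmod{p^r}$ and $\lvert\widetilde{\beta}\rvert = p^r$. The only difference is that you spell out the intermediate conjugation identities in more detail than the paper does.
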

\begin{proof}By Lemma \ref{reduction lem}, a triplet $(a,b,d)$ is pre-admissible if and only if
\[ (\widetilde{\beta}^{b})^{p^m} = \mathrm{Id}_G,\, (\widetilde{\alpha}^a\widetilde{\delta}^{d})^{p^n} = \mathrm{Id}_G,\,  (\widetilde{\alpha}^a\widetilde{\delta}^d)^{-1}\widetilde{\beta}^{b}(\widetilde{\alpha}^a\widetilde{\delta}^d) = \widetilde{\beta}^{b(1+p^{m-r})}.\]
Note that the first two equalities always hold by (\ref{abd orders}). For the last equality, using (\ref{conjugation}) we may rewrite the left hand side as
\[(\widetilde{\alpha}^a\widetilde{\delta}^d)^{-1}\widetilde{\beta}^{b}(\widetilde{\alpha}^a\widetilde{\delta}^d)  = \widetilde{\delta}^{-d}\widetilde{\alpha}^{-a}\widetilde{\beta}^b \widetilde{\alpha}^a\widetilde{\delta}^{d} = \widetilde{\beta}^{b\widetilde{u}^{-a}\widetilde{v}^d}.\]
Since $\widetilde{\beta}$ has order $p^r$ and $\widetilde{v}\equiv1\pmod{p^r}$, we see that the claim holds.
\end{proof}

To decide whether a pre-admissible triplet $(a,b,d)$ is in fact admissible, we need to check the two equations in (\ref{8 relations}) for $\varphi$ ranging over $\alpha,\beta,\gamma,\delta$. There are eight relations in total, but it turns that out five of them always hold.

\begin{prop}\label{5 relations prop}For any pre-admissible triplet $(a,b,d)$, the antihomomorphism $\Gamma=\Gamma_{a,b,d}$ satisfies the relations
\begin{align*}
\Gamma(\varphi(x)) &= \varphi\Gamma(x)\varphi^{-1}\mbox{ for }\varphi\in\{\alpha,\beta,\delta\},\\
\Gamma(\varphi(y)) &= \varphi\Gamma(y)\varphi^{-1}\mbox{ for }\varphi\in\{\alpha,\delta\}.
\end{align*}
\end{prop}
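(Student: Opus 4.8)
The plan is to verify each of the five claimed relations by direct computation, using the explicit definitions $\Gamma(x) = \widetilde{\beta}^b$ and $\Gamma(y) = \widetilde{\alpha}^a\widetilde{\delta}^d$ together with the conjugation relations in (\ref{conjugation}) and (\ref{conjugation'}). The key simplifying observation is that the three ``tilde'' generators $\widetilde{\alpha},\widetilde{\delta}$ commute with each other, since $\alpha\delta = \delta\alpha$ by (\ref{conjugation}), so the value $\Gamma(y)$ lives in the abelian subgroup $\langle\widetilde{\alpha}\rangle\times\langle\widetilde{\delta}\rangle$, while $\Gamma(x)$ is a power of $\widetilde{\beta}$. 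The strategy for each relation is to compute both sides as words in $\alpha,\beta,\gamma,\delta$ and compare exponents modulo the respective orders recorded in (\ref{abd orders}).

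First I would dispatch the four relations where $\varphi\in\{\alpha,\delta\}$, as these should be the cleanest. For $\varphi = \delta$: the relation $\Gamma(\delta(y)) = \delta\Gamma(y)\delta^{-1}$ reads $\Gamma(y)^{1+p^r} = \delta\Gamma(y)\delta^{-1}$, and since $\Gamma(y)=\widetilde{\alpha}^a\widetilde{\delta}^d$ with $\widetilde{\alpha}$ commuting with $\delta$ (as $\alpha\delta=\delta\alpha$) and $\delta\beta\delta^{-1}=\beta^{(1+p^r)^{-1}}$ not entering, this reduces to checking $\widetilde{\alpha}^{ap^r}\widetilde{\delta}^{dp^r}=\mathrm{Id}_G$, which holds by the orders in (\ref{abd orders}). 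The relation $\Gamma(\delta(x))=\delta\Gamma(x)\delta^{-1}$ reads $\widetilde{\beta}^b = \delta\widetilde{\beta}^b\delta^{-1} = \widetilde{\beta}^{b(1+p^r)^{-1}}$, which holds because $\widetilde{\beta}$ has order $p^r$ and $(1+p^r)^{-1}\equiv 1\pmod{p^r}$. For $\varphi=\alpha$: the relation on $y$ reads $\Gamma(y)=\alpha\Gamma(y)\alpha^{-1}$, and since $\alpha\delta=\delta\alpha$ and $\alpha\widetilde{\alpha}\alpha^{-1}=\widetilde{\alpha}$ trivially (as $\widetilde{\alpha}$ is a power of $\alpha$), both sides coincide at once; the relation on $x$ reads $\Gamma(x^u)=\Gamma(x)^u=\widetilde{\beta}^{bu}$ versus $\alpha\widetilde{\beta}^b\alpha^{-1}=\widetilde{\beta}^{bu}$ using $\alpha\beta\alpha^{-1}=\beta^u$, so these match identically.

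The remaining relation is $\Gamma(\beta(x))=\beta\Gamma(x)\beta^{-1}$. Here $\beta(x)=x$, so the left side is just $\Gamma(x)=\widetilde{\beta}^b$, while the right side is $\beta\widetilde{\beta}^b\beta^{-1}=\widetilde{\beta}^b$ since $\beta$ commutes with its own power $\widetilde{\beta}$; thus this relation is automatic as well. I expect the main obstacle to be bookkeeping rather than conceptual difficulty: one must keep careful track of which tilde generators commute and ensure that the exponents $u^{-1}-1$, $(1+p^r)^{-1}-1$, and $p^r$ are all accounted for modulo the correct orders $p^r$ or $p^{\min\{r,n-r\}}$. In particular, the one place requiring genuine care is confirming that the $\gamma$-contributions never appear on either side of these five relations — this is exactly why $\Gamma(y)$ and $\Gamma(x)$ were pinned down in Proposition \ref{abd prop} to avoid any $\gamma$-component, so that the problematic relation (\ref{conjugation'}) involving $a_0$ and $\gamma^{u^{-1}}$ is sidestepped entirely. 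The three relations genuinely left for later (namely $\varphi=\beta$ acting on $y$, and $\varphi=\gamma$ acting on both $x$ and $y$) are precisely the ones where $\beta(y)=x^{p^{\max\{m-n,0\}}}y$ and the $\gamma$-relations force nontrivial congruences, and those will be handled in the subsequent analysis leading to the full admissibility criterion.
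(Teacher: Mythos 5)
Your proposal is correct and follows essentially the same route as the paper: direct verification of each of the five relations using $\Gamma(x)=\widetilde{\beta}^b$, $\Gamma(y)=\widetilde{\alpha}^a\widetilde{\delta}^d$, the conjugation relations in (\ref{conjugation}), and the orders in (\ref{abd orders}). All five individual checks match the paper's computations, so there is nothing to add.
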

\begin{proof}Using (\ref{conjugation}), it is straightforward to check that
\begin{align*}
\Gamma(\alpha(x)) & = \Gamma(x)^u = \widetilde{\beta}^{bu} = \alpha\widetilde{\beta}^b\alpha^{-1} = \alpha\Gamma(x)\alpha^{-1},\\
\Gamma(\beta(x)) & = \Gamma(x) = \widetilde{\beta}^{b} = \beta\widetilde{\beta}^{b}\beta^{-1} = \beta\Gamma(x)\beta^{-1},\\
\Gamma(\alpha(y)) & = \Gamma(y) = \widetilde{\alpha}^a\widetilde{\delta}^d = \alpha\widetilde{\alpha}^{a}\widetilde{\delta}^{d}\alpha^{-1} = \alpha\Gamma(y)\alpha^{-1}.
\end{align*}
Together with (\ref{abd orders}), it is also easy to see that
\begin{align*}
\Gamma(\delta(x)) & = \Gamma(x) = \widetilde{\beta}^{b} = \widetilde{\beta}^{b(1+p^r)^{-1}} = \delta\widetilde{\beta}^{b}\delta^{-1}  = \delta\Gamma(x)\delta^{-1},\\
\Gamma(\delta(y)) & = \Gamma(y)^{1+p^r} = \widetilde{\alpha}^{a(1+p^r)}\widetilde{\delta}^{d(1+p^r)} = \widetilde{\alpha}^{a}\widetilde{\delta}^{d} = \delta\widetilde{\alpha}^{a}\widetilde{\delta}^{d}\delta^{-1} = \delta\Gamma(y)\delta^{-1}.
\end{align*}
Thus, indeed the five stated relations are satisfied.
\end{proof}

For the remaining three relations, two of them are fairly easy to deal with, while $\Gamma(\gamma(x)) = \gamma\Gamma(x)\gamma^{-1}$ is complicated in general. But as we shall show, the congruences in Proposition \ref{3 relations prop}(c) below may be simplified when $m\leq n$ or $n\leq m-r$; this is why we restricted to these two cases in Theorem \ref{main thm}. 
\begin{prop}\label{3 relations prop}Let $(a,b,d)$ be a pre-admissible triplet and put $\Gamma = \Gamma_{a,b,d}$. 
\begin{enumerate}[(a)]
\item The relation $\Gamma(\beta(y)) = \beta\Gamma(y)\beta^{-1}$ holds if and only if
\[\widetilde{u}^{-a}\widetilde{v}^{d}\equiv 1+bp^{m-r} \pmmod{p^{\min\{m,n\}}}.\]
\item The relation $\Gamma(\gamma(y)) = \gamma\Gamma(y)\gamma^{-1}$ holds if and only if
\begin{align}\label{eqn1}
a_0(\widetilde{u}^{-a}-1)&\equiv0\pmmod{p^{m-1}},\\\label{eqn2}
\widetilde{u}^{-a}\widetilde{v}^{d}&\equiv 1\pmmod{p^{\min\{m-r,n\}}},
\end{align}
where $a_0$ is defined as in Proposition \ref{Aut prop}.
\item The relation $\Gamma(\gamma(x)) = \gamma\Gamma(x)\gamma^{-1}$ holds if and only if
\begin{align*}
\widetilde{u}^{aq}(1+bp^{m-r}S(1+p^{m-r},q))&\equiv 1\pmmod{p^m},\\
\widetilde{u}^{aq}bp^{m-r}& \equiv S((1+p^{m-r})^q,bp^{m-r})\pmmod{p^m},\\
\widetilde{v}^{dq} &\equiv 1 \pmmod{p^n},
\end{align*}
where we define $q = p^{\max\{n-m+r,0\}}$.
\end{enumerate}
\end{prop}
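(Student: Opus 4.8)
The plan is to compute each of the three relations in Proposition \ref{3 relations prop} directly by expanding the conjugation action of $\gamma$ on $\Gamma(x)$ and $\Gamma(y)$ using the relations \eqref{conjugation} and \eqref{conjugation'}, and then comparing the results component-by-component in the decomposition $\Aut(G) = \langle\beta\rangle\langle\gamma\rangle\langle\alpha\rangle\langle\delta\rangle$. Since parts (a) and (b) concern $\Gamma(y) = \widetilde\alpha^a\widetilde\delta^d$, which lies in $\langle\alpha\rangle\times\langle\delta\rangle$, and part (c) concerns $\Gamma(x) = \widetilde\beta^b$, a power of $\beta$, the key technical inputs are how $\beta$ and $\gamma$ interact with $\langle\alpha\rangle$ and $\langle\delta\rangle$, and how $\gamma$ acts on powers of $\beta$.

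\begin{proof}[Proof of (a)]
First I would use $\beta(y) = x^{p^{\max\{m-n,0\}}}y$ together with the antihomomorphism property to write $\Gamma(\beta(y)) = \Gamma(y)\,\Gamma(x)^{p^{\max\{m-n,0\}}} = \widetilde\alpha^a\widetilde\delta^d\widetilde\beta^{bp^{\max\{m-n,0\}}}$. On the other side, conjugating $\Gamma(y) = \widetilde\alpha^a\widetilde\delta^d$ by $\beta$ leaves it fixed since $\langle\alpha\rangle\times\langle\delta\rangle$ commutes suitably with $\beta$ as needed, so the content reduces to comparing the $\langle\beta\rangle$-component contributed by $\Gamma(x)^{p^{\max\{m-n,0\}}}$ against the $\beta$-twist on the right. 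Unwinding $\widetilde\beta = \beta^{p^{\min\{m,n\}-r}}$ and using $\widetilde\beta(y) = x^{p^{m-r}}y$, one evaluates both sides on the generator $y$ and compares the $\langle x\rangle$-exponents modulo $p^m$; after dividing by $p^{m-r}$ this yields a congruence modulo $p^{\min\{m,n\}}$, and collecting the $\widetilde u^{-a}\widetilde v^d$ factor from the interchange of $\widetilde\alpha^a\widetilde\delta^d$ past $\widetilde\beta$ produces exactly $\widetilde u^{-a}\widetilde v^d \equiv 1 + bp^{m-r} \pmod{p^{\min\{m,n\}}}$.
\end{proof}

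\begin{proof}[Proof of (b)]
Here I would use $\gamma(y) = y$, so $\Gamma(\gamma(y)) = \Gamma(y) = \widetilde\alpha^a\widetilde\delta^d$, and the relation becomes $\widetilde\alpha^a\widetilde\delta^d = \gamma\widetilde\alpha^a\widetilde\delta^d\gamma^{-1}$. The only nontrivial interaction comes from \eqref{conjugation'}, namely $\alpha\gamma\alpha^{-1} = \alpha^{a_0}\gamma^{u^{-1}}$, which upon inversion and iteration describes how $\gamma$ moves past a power of $\alpha$: conjugating $\widetilde\alpha^a = \alpha^{a(p-1)p^{m-r-1}}$ by $\gamma$ introduces an $\alpha^{a_0(\widetilde u^{-a}-1)}$-type correction term together with a rescaling of the $\gamma$-exponent, while conjugating $\widetilde\delta^d$ by $\gamma$ contributes a factor governed by $\delta\gamma\delta^{-1} = \gamma^{1+p^r}$. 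Comparing the $\langle\alpha\rangle$-component gives \eqref{eqn1}, the requirement $a_0(\widetilde u^{-a}-1)\equiv 0 \pmod{p^{m-1}}$, and comparing the $\langle\gamma\rangle$-component (equivalently, demanding that the net $\gamma$-twist act trivially, which by Lemma \ref{simple lemma}(b) forces the $\langle x\rangle$-exponent to vanish) produces \eqref{eqn2} modulo $p^{\min\{m-r,n\}}$.
\end{proof}

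\begin{proof}[Proof of (c)]
This is the hard part, and I expect it to be the main obstacle, since $\Gamma(x) = \widetilde\beta^b$ is a power of $\beta$ and $\gamma$ does not normalize $\langle\beta\rangle$ in general. I would start from $\gamma(x) = xy^{q}$ with $q = p^{\max\{n-m+r,0\}}$, so $\Gamma(\gamma(x)) = \Gamma(y)^q\,\Gamma(x) = \widetilde\alpha^{aq}\widetilde\delta^{dq}\widetilde\beta^b$, and compare this with $\gamma\widetilde\beta^b\gamma^{-1}$. Evaluating both permutations on the generators $x$ and $y$ and tracking the $\langle x\rangle$- and $\langle y\rangle$-exponents separately gives three scalar congruences. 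The $\langle x\rangle$-exponent on $x$ yields the first relation $\widetilde u^{aq}(1 + bp^{m-r}S(1+p^{m-r},q)) \equiv 1 \pmod{p^m}$, the $\langle x\rangle$-exponent on $y$ (coming from how $\widetilde\beta^b$ moves $y$ and how $\gamma$-conjugation redistributes it, invoking the formula $(x^iy^j)^\ell = x^{iS((1+p^{m-r})^j,\ell)}y^{j\ell}$) yields the second relation involving $S((1+p^{m-r})^q, bp^{m-r})$, and the $\langle y\rangle$-exponent gives $\widetilde v^{dq}\equiv 1 \pmod{p^n}$. The delicate bookkeeping is in the second congruence, where the geometric-sum identity $S(z,\ell)$ and the induction formula for $(x^iy^j)^\ell$ must be applied carefully; this is precisely where the expression becomes unmanageable in the regime $m-r < n < m$, explaining why that case is excluded from Theorem \ref{main thm}.
\end{proof}
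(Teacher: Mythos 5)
Your proposal is correct and follows essentially the same route as the paper: rewrite each relation via the antihomomorphism property ($\Gamma(\beta(y))=\Gamma(y)\Gamma(x)^{p^{\max\{m-n,0\}}}$, $\Gamma(\gamma(x))=\Gamma(y)^q\Gamma(x)$), then use the commutation relations (\ref{conjugation}) and the iterate of (\ref{conjugation'}) to compare components, evaluating on the generators $x,y$ for part (c). One caution: in (a) the phrase ``conjugating $\Gamma(y)$ by $\beta$ leaves it fixed'' is literally false --- $\beta\widetilde{\alpha}^a\widetilde{\delta}^d\beta^{-1}=\widetilde{\alpha}^a\widetilde{\delta}^d\beta^{\widetilde{u}^{-a}\widetilde{v}^d-1}$, and it is exactly this $\beta$-correction that produces the stated congruence --- but your subsequent sentence shows you account for this, so it is only a matter of phrasing.
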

\begin{proof}[Proof of (a)]The relation $\Gamma(\beta(y)) = \beta\Gamma(y)\beta^{-1}$ is equivalent to
\[\Gamma(x)^{p^{\max\{m-n,0\}}}\beta = \Gamma(y)^{-1} \beta\Gamma(y),\mbox{ that is }\beta^{bp^{m-r}}\beta =  (\widetilde{\alpha}^{a}\widetilde{\delta}^{d})^{-1}\beta(\widetilde{\alpha}^{a}\widetilde{\delta}^{d}).\]
But by (\ref{conjugation}), we may rewrite the right hand side as
\[  (\widetilde{\alpha}^{a}\widetilde{\delta}^{d})^{-1}\beta(\widetilde{\alpha}^{a}\widetilde{\delta}^{d})= \widetilde{\delta}^{-d}\widetilde{\alpha}^{-a}\beta \widetilde{\alpha}^a\widetilde{\delta}^{d} = \beta^{\widetilde{u}^{-a}\widetilde{v}^{d}}, \]
and the claim is now clear.
\end{proof}

\begin{proof}[Proof of (b)] The relation $\Gamma(\gamma(y)) = \gamma\Gamma(y)\gamma^{-1}$ is equivalent to
\[\gamma = \Gamma(y)\gamma\Gamma(y)^{-1},\mbox{ that is }\gamma = (\widetilde{\alpha}^{a}\widetilde{\delta}^{d})\gamma(\widetilde{\alpha}^{a}\widetilde{\delta}^{d})^{-1}.\]
First, using induction, we deduce from (\ref{conjugation'}) that
\[ \alpha^\ell \gamma \alpha^{-\ell} = \alpha^{a_0S(u^{-1},\ell)}\gamma^{u^{-\ell}}
= \alpha^{a_0(u^{-\ell}-1)/(u^{-1}-1)}\gamma^{u^{-\ell}}\mbox{ for all }\ell\in\bN_{\geq0}.\]
Together with (\ref{conjugation}), we then see that
\[ (\widetilde{\alpha}^{a}\widetilde{\delta}^{d})\gamma(\widetilde{\alpha}^{a}\widetilde{\delta}^{d})^{-1} = \widetilde{\alpha}^{a} \gamma^{\widetilde{v}^d} \widetilde{\alpha}^{-a} = \alpha^{a_0\widetilde{v}^{d}(\widetilde{u}^{-a}-1)/(u^{-1}-1)}\gamma^{\widetilde{u}^{-a}\widetilde{v}^d}.\]
Thus, the relation $\Gamma(\gamma(y)) = \gamma\Gamma(y)\gamma^{-1}$ holds exactly when
\[ \alpha^{a_0\widetilde{v}^{d}(\widetilde{u}^{-a}-1)/(u^{-1}-1)} = \mathrm{Id}_G \mbox{ and }\gamma^{\widetilde{u}^{-a}\widetilde{v}^d} = \gamma.\]
Note that $u^{a_0}\equiv1$ (mod $p$) by definition, so $p-1$ divides $a_0$. Since $u^{-1}-1$ and $\widetilde{v}$ are coprime to $p$, we see that the first equation is equivalent to (\ref{eqn1}). The second equation is clearly equivalent to (\ref{eqn2}), whence the claim.
\end{proof}

\begin{proof}[Proof of (c)] The relation $\Gamma(\gamma(x)) = \gamma\Gamma(x)\gamma^{-1}$ is equivalent to
\[ \Gamma(y)^{q}\Gamma(x)\gamma  = \gamma\Gamma(x),\mbox{ that is }\widetilde{\alpha}^{aq}\widetilde{\delta}^{dq} \widetilde{\beta}^b\gamma = \gamma \widetilde{\beta}^b,\]
where $q= p^{\max\{n-m+r,0\}}$.
A direct computation yields
\begin{align*}
(\widetilde{\alpha}^{aq}\widetilde{\delta}^{dq}\widetilde{\beta}^b\gamma)(x)
& = x^{\widetilde{u}^{aq}(1+ bp^{m-r}S(1+p^{m-r},q))}y^{\widetilde{v}^{dq}q},\\
(\widetilde{\alpha}^{aq}\widetilde{\delta}^{dq}\widetilde{\beta}^b\gamma)(y)
& =x^{\widetilde{u}^{aq}bp^{m-r}} y^{\widetilde{v}^{dq}},\\
(\gamma\widetilde{\beta}^b)(x) &= xy^q,\\
(\gamma\widetilde{\beta}^b)(y) &= x^{S((1+p^{m-r})^q,bp^{m-r})}y^{1+bp^{m-r}q},
\end{align*}
and $y^{1+bp^{m-r}q} = y$. The claim now follows by comparing the exponents.
\end{proof}

To summarize, we have shown that a triplet $(a,b,d)$ is admissible exactly when the congruence conditions in Propositions \ref{pre prop} and \ref{3 relations prop} are all satisfied. Notice that (\ref{eqn2}) follows from the condition in Proposition \ref{3 relations prop}(a) and so may be omitted. Let us further simplify the conditions, as follows.

\begin{lem}\label{b lemma}Every admissible triplet $(a,b,d)$ satisfies
\[b\equiv0\mbox{ or }1\pmmod{p^{\max\{2r-m,0\}}}.\]
\end{lem}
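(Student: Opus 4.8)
The plan is to reduce immediately to the only nontrivial case and then combine two of the defining congruences for admissibility into a single quadratic condition on $b$. Since $\max\{2r-m,0\}=0$ whenever $2r\le m$, the asserted congruence is vacuous in that range, so I may assume $2r>m$, equivalently $m-r<r$; note that then $0<2r-m<r$ because $r\le m-1$. In this regime the statement $b\equiv0,1\pmod{p^{2r-m}}$ is equivalent to $b(b-1)\equiv0\pmod{p^{2r-m}}$, since $b$ and $b-1$ are coprime and $p^{2r-m}$ is a prime power, so it divides exactly one of them. It therefore suffices to derive this last congruence.

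The two ingredients are the pre-admissibility condition of Proposition \ref{pre prop}, namely $b\widetilde{u}^{-a}\equiv b(1+p^{m-r})\pmod{p^r}$, and the congruence of Proposition \ref{3 relations prop}(a), namely $\widetilde{u}^{-a}\widetilde{v}^{d}\equiv1+bp^{m-r}\pmod{p^{\min\{m,n\}}}$. First I would reduce the latter modulo $p^r$, which is legitimate because $r\le m-1<m$ and $r\le n$ give $r\le\min\{m,n\}$. By (\ref{uv divisible}) we have $\widetilde{v}\equiv1\pmod{p^{\max\{r,n-r\}}}$ and hence $\widetilde{v}\equiv1\pmod{p^r}$, so that $\widetilde{v}^{d}\equiv1\pmod{p^r}$ and the reduced congruence collapses to $\widetilde{u}^{-a}\equiv1+bp^{m-r}\pmod{p^r}$.

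Substituting this expression for $\widetilde{u}^{-a}$ into the pre-admissibility condition gives $b(1+bp^{m-r})\equiv b(1+p^{m-r})\pmod{p^r}$, that is $b^{2}p^{m-r}\equiv bp^{m-r}\pmod{p^r}$. Cancelling the factor $p^{m-r}$ lowers the modulus to $p^{\,r-(m-r)}=p^{2r-m}$ and yields exactly $b(b-1)\equiv0\pmod{p^{2r-m}}$, which completes the argument. I do not anticipate a serious obstacle: the entire content is the observation that the two congruences, when read modulo $p^r$, feed into one another to produce the idempotent relation $b^2\equiv b$. The only points requiring care are the bookkeeping that $r\le\min\{m,n\}$ justifies reducing Proposition \ref{3 relations prop}(a) modulo $p^r$, the fact that $\widetilde{v}\equiv1\pmod{p^r}$ kills the $\widetilde{v}^{d}$ factor, and the remark that $p^{2r-m}$ being a prime power lets coprimality of $b$ and $b-1$ force the clean dichotomy.
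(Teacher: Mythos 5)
Your proof is correct and follows essentially the same route as the paper: both arguments combine the pre-admissibility congruence of Proposition \ref{pre prop} with the condition of Proposition \ref{3 relations prop}(a), use $\widetilde{v}\equiv1\pmod{p^r}$ to eliminate $\widetilde{v}^d$, and arrive at $b(b-1)p^{m-r}\equiv0\pmod{p^r}$, concluding via the coprimality of $b$ and $b-1$. The only (immaterial) difference is that you substitute $\widetilde{u}^{-a}\equiv1+bp^{m-r}\pmod{p^r}$ into the pre-admissibility condition, whereas the paper rearranges in the other direction.
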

\begin{proof}Recall that $\widetilde{v}\equiv1\pmod{p^r}$. From the conditions in Propositions \ref{pre prop} and \ref{3 relations prop}(a), we then deduce that
\begin{align*}
0 & \equiv b(\widetilde{u}^a(1+p^{m-r})-1)\pmmod{p^r}\\
& \equiv b(\widetilde{u}^a(1+bp^{m-r}) + \widetilde{u}^a(1-b)p^{m-r} - 1)\pmmod{p^r}\\
& \equiv b(\widetilde{v}^d + \widetilde{u}^a(1-b)p^{m-r}-1)\pmmod{p^r}\\
& \equiv \widetilde{u}^ab(1-b)p^{m-r}\pmod{p^r}.
\end{align*}
Since $\widetilde{u}$ is coprime to $p$, this implies that
\[ b(1-b)\equiv0\pmmod{p^{\max\{2r-m,0\}}}.\]
The claim now follows since $b$ and $1-b$ cannot both be divisible by $p$.
\end{proof}

Since $\widetilde{v}\equiv1\pmod{p^r}$, given the two conditions in Proposition \ref{3 relations prop}(a) and Lemma \ref{b lemma}, we may deduce that 
\[ b\widetilde{u}^{-a} \equiv b(1+bp^{m-r}) \equiv b(1+p^{m-r}) \pmmod{p^r}, \]
which is the condition in Proposition \ref{pre prop}. Hence, a triplet $(a,b,d)$ is admissible if and only if the conditions in (\ref{eqn1}), Propositions \ref{3 relations prop}(a),(c), and Lemma \ref{b lemma} are all satisfied. We now specialize to the cases $m\leq n$ and $n\leq m-r$.

\begin{cor}\label{criterion cor}If $m\leq n$, then $(a,b,d)$ is admissible exactly when
\begin{align*}
\widetilde{u}^{-a}\widetilde{v}^d & \equiv 1+ bp^{m-r}\pmmod{p^m},\\
b & \equiv 0\mbox{ or }1 \pmmod{p^{\max\{2r-m,0\}}}.
\end{align*}
\end{cor}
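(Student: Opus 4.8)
The plan is to start from the characterization stated immediately before the corollary: a triplet $(a,b,d)$ is admissible if and only if the congruence (\ref{eqn1}), the two congruences in Proposition \ref{3 relations prop}(a),(c), and that in Lemma \ref{b lemma} all hold. When $m\leq n$ we have $\min\{m,n\}=m$, so the congruence in Proposition \ref{3 relations prop}(a) is precisely the first displayed condition of the corollary, and Lemma \ref{b lemma} is the second. Hence it suffices to prove that, under the hypothesis $m\leq n$, the remaining requirements — namely (\ref{eqn1}) and the three congruences in Proposition \ref{3 relations prop}(c) — hold automatically for every triplet and may therefore be discarded.

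The engine driving everything is the behaviour of $q=p^{\max\{n-m+r,0\}}=p^{n-m+r}$ when $m\leq n$. Writing $v_p$ for the $p$-adic valuation, two elementary facts follow from Lemma \ref{simple lemma}: since $n-m+r\geq r$ we have $p^r\mid q$; and by Lemma \ref{simple lemma}(b),
\[ v_p\bigl((1+p^{m-r})^{q}-1\bigr)=(m-r)+v_p(q)=(m-r)+(n-m+r)=n\geq m,\]
so that $(1+p^{m-r})^{q}\equiv1\pmod{p^m}$. I would first use the second fact to clear (\ref{eqn1}). Recalling the defining congruence for $a_0$ in Proposition \ref{Aut prop}, in which the exponent $u^{-1}$ is read modulo $p^{\min\{m,n\}}=p^m$, the relation $(1+p^{m-r})^{q}\equiv1\pmod{p^m}$ gives $S\bigl((1+p^{m-r})^{q},u^{-1}\bigr)\equiv u^{-1}\pmod{p^m}$, whence $u^{a_0}\equiv u\cdot u^{-1}\equiv1\pmod{p^m}$. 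As $u$ generates the units modulo $p^m$, this forces $(p-1)p^{m-1}\mid a_0$, so in particular $p^{m-1}\mid a_0$ and (\ref{eqn1}) is trivially satisfied.

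Next I would dispatch Proposition \ref{3 relations prop}(c). The term $bp^{m-r}S(1+p^{m-r},q)$ in its first congruence equals $b\bigl((1+p^{m-r})^{q}-1\bigr)\equiv0\pmod{p^m}$ by the displayed valuation; together with $\widetilde{u}^{aq}\equiv1\pmod{p^m}$ — valid since $\widetilde{\alpha}$, equivalently $\widetilde{u}$, has order $p^r$ by (\ref{abd orders}) and $p^r\mid q\mid aq$ — the first congruence collapses to $1\equiv1$. The second then reads $bp^{m-r}\equiv S\bigl((1+p^{m-r})^{q},bp^{m-r}\bigr)\pmod{p^m}$, and since $(1+p^{m-r})^{q}\equiv1\pmod{p^m}$ each summand on the right is $\equiv1$, so the sum is $\equiv bp^{m-r}$ and it too holds. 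For the third congruence $\widetilde{v}^{dq}\equiv1\pmod{p^n}$, I would argue purely by valuation: using (\ref{uv divisible}) and Lemma \ref{simple lemma}(b),
\[ v_p\bigl(\widetilde{v}^{dq}-1\bigr)=\max\{r,n-r\}+v_p(d)+(n-m+r),\]
so the requirement becomes $\max\{r,n-r\}+v_p(d)\geq m-r$. If $n-r\geq r$ this reduces to $v_p(d)\geq m-n\leq0$, and if $n-r<r$ then $m\leq n<2r$ gives $m-2r<0$, so the requirement $v_p(d)\geq m-2r$ is again vacuous; in either case the congruence holds.

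The only genuinely delicate step — the one I expect to require the most care — is the computation of $v_p(a_0)$, since $a_0$ is given only implicitly through a congruence involving $S\bigl((1+p^{m-r})^{q},u^{-1}\bigr)$; everything else is routine valuation bookkeeping via Lemma \ref{simple lemma}. Once these four verifications are assembled, the characterization collapses to exactly the two stated congruences, and the corollary follows.
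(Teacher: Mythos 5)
Your proposal is correct and follows essentially the same route as the paper: reduce to the summary characterization preceding the corollary, observe that Proposition \ref{3 relations prop}(a) and Lemma \ref{b lemma} give exactly the two stated congruences, and then use $p^r\mid q$ together with $(1+p^{m-r})^q\equiv1\pmod{p^m}$ (and the resulting $p^{m-1}\mid a_0$, $\widetilde{u}^{aq}\equiv1\pmod{p^m}$, $\widetilde{v}^{dq}\equiv1\pmod{p^n}$) to show that (\ref{eqn1}) and the three congruences of Proposition \ref{3 relations prop}(c) hold automatically. The only cosmetic difference is that you phrase the verifications via explicit $p$-adic valuations where the paper cites Lemma \ref{simple lemma} and (\ref{uv divisible}) more tersely; the substance is identical.
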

\begin{proof}Suppose that $m\leq n$ and let $q = p^{n-m+r}$. Since $p^r\mid q$, we have
\[ (1+p^{m-r})^q\equiv 1\pmmod{p^m}\]
by Lemma \ref{simple lemma}(a). It then follows from the definition that
\[ u^{a_0}\equiv uS((1+p^{m-r})^q,u^{-1}) \equiv uS(1,u^{-1})\equiv 1\pmmod{p^m},\]
which implies $a_0\equiv 0\pmod{p^{m-1}}$. This means that (\ref{eqn1}) always holds and so may be omitted. Again by Lemma \ref{simple lemma}, and also (\ref{uv divisible}), we have
\[\widetilde{u}^{q} \equiv 1\pmmod{p^m},\,\ \widetilde{v}^{q}\equiv1\pmmod{p^n},\,\ S(1+p^{m-r},q) \equiv0\pmmod{p^r}.\]
We then see that the conditions in Proposition \ref{3 relations prop}(c) hold and thus may be omitted as well. Hence, we are left with the conditions in Proposition \ref{3 relations prop}(a) and Lemma \ref{b lemma}, as claimed.
\end{proof}

\begin{lem}\label{k power}If $r\leq m-r$, then for any $z\in \bZ$ and $\ell\in\bN_{\geq0}$, we have
\[ (1+zp^{m-r})^\ell \equiv 1 + \ell zp^{m-r} \pmmod{p^m},\]
which in particular implies that
\[ S(1+ zp^{m-r},\ell) \equiv \ell + \frac{1}{2}\ell(\ell-1)zp^{m-r}\pmmod{p^m}.\]
\end{lem}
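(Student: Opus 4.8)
The plan is to derive the first congruence directly from the binomial theorem, and then obtain the formula for $S$ by summing it. First I would expand
\[ (1+zp^{m-r})^\ell = \sum_{k=0}^{\ell}\binom{\ell}{k}z^k p^{k(m-r)} \]
and separate off the terms $k=0$ and $k=1$, which together contribute $1+\ell zp^{m-r}$. The crux is that the hypothesis $r\leq m-r$ is equivalent to $2(m-r)\geq m$; hence for every $k\geq 2$ we have $k(m-r)\geq 2(m-r)\geq m$, using here that $m-r>0$ (which holds since $r\leq m-1$) so that $k(m-r)$ is genuinely increasing in $k$. Because the binomial coefficients are integers, each such term $\binom{\ell}{k}z^kp^{k(m-r)}$ is divisible by $p^m$ and therefore vanishes modulo $p^m$. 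This leaves exactly $1+\ell zp^{m-r}$, which is the first congruence.

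For the second statement I would write $S(1+zp^{m-r},\ell)=\sum_{j=0}^{\ell-1}(1+zp^{m-r})^j$ and substitute the first congruence into each summand, obtaining
\[ S(1+zp^{m-r},\ell)\equiv \sum_{j=0}^{\ell-1}(1+jzp^{m-r}) = \ell + zp^{m-r}\sum_{j=0}^{\ell-1}j \pmod{p^m}. \]
Using $\sum_{j=0}^{\ell-1}j=\tfrac{1}{2}\ell(\ell-1)$ then yields the claimed identity. I would remark that $\tfrac{1}{2}\ell(\ell-1)$ is an integer, being the product of consecutive integers divided by $2$, so that the right-hand side is a genuine integer and the congruence is meaningful.

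There is essentially no serious obstacle here: all of the content is contained in the single inequality $2(m-r)\geq m$ supplied by the hypothesis, which annihilates every quadratic-and-higher term of the binomial expansion. The only minor points to keep straight are that the integer binomial coefficients introduce no denominators (so that $p$-adic valuations add without interference) and the integrality of $\tfrac{1}{2}\ell(\ell-1)$ noted above.
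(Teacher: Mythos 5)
Your proof is correct and is exactly the argument the paper has in mind: the paper's own proof consists of the single remark that the lemma ``follows from the binomial theorem,'' and you have simply written out the details (the key point being that $k(m-r)\geq 2(m-r)\geq m$ for $k\geq 2$ under the hypothesis $r\leq m-r$). No further comment is needed.
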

\begin{proof}This follows from the binomial theorem, for example.
\end{proof}

\begin{cor}\label{criterion cor'}If $n\leq m-r$, then $(a,b,d)$ is admissible exactly when
\begin{align*}
\widetilde{u}^{a}(1+bp^{m-r})&\equiv 1\pmmod{p^m},\\
\widetilde{v}^{d} &\equiv 1 \pmmod{p^n}.
\end{align*}
\end{cor}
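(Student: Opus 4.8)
The plan is to begin from the reduced criterion recorded just before the corollary: a triplet $(a,b,d)$ is admissible precisely when (\ref{eqn1}), the conditions of Propositions \ref{3 relations prop}(a) and (c), and Lemma \ref{b lemma} all hold. I would then specialize each of these to the regime $n\leq m-r$, recording at the outset the arithmetic simplifications it produces. Since $r\leq n\leq m-r$, we have $2r\leq m$; in particular $\max\{2r-m,0\}=0$ and $r\leq m-r$, so Lemma \ref{k power} is available and $2(m-r)\geq m$. Moreover $q=p^{\max\{n-m+r,0\}}=1$, $\min\{m,n\}=n$, and because $p^{m-r}\parallel\widetilde{u}-1$ with $m-r\geq n$ we have $\widetilde{u}\equiv 1\pmod{p^n}$ and $v_p(\widetilde{u}^{-a}-1)\geq m-r$.

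The first thing to dispose of, and what I expect to be the main obstacle, is showing that (\ref{eqn1}) holds automatically here; unlike the case $m\leq n$ treated in Corollary \ref{criterion cor}, the integer $a_0$ need not be divisible by $p^{m-1}$, so a valuation count is needed. Reducing the defining congruence of $a_0$ modulo $p^n$ and using $m-r\geq n$ (so every term of the relevant sum is $\equiv 1\pmod{p^n}$ and $u\cdot u^{-1}\equiv 1$), one finds $u^{a_0}\equiv 1\pmod{p^n}$; since $u$ has order $(p-1)p^{n-1}$ modulo $p^n$, this gives $(p-1)p^{n-1}\mid a_0$ and hence $v_p(a_0)\geq n-1$. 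Combined with $v_p(\widetilde{u}^{-a}-1)\geq m-r$, the product satisfies $v_p(a_0(\widetilde{u}^{-a}-1))\geq (n-1)+(m-r)\geq m-1$, the last step using $r\leq n$. Thus (\ref{eqn1}) is automatic and may be dropped.

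Next I would simplify Proposition \ref{3 relations prop}(c) with $q=1$. Its first condition reads $\widetilde{u}^{a}(1+bp^{m-r}S(1+p^{m-r},1))\equiv 1\pmod{p^m}$, and $S(1+p^{m-r},1)=1$ turns this into the first asserted congruence. Its third condition reads $\widetilde{v}^{d}\equiv 1\pmod{p^n}$, the second asserted congruence. For the middle condition I would invoke Lemma \ref{k power} to get $S(1+p^{m-r},bp^{m-r})\equiv bp^{m-r}\pmod{p^m}$ (the correction term carries $p^{2(m-r)}$, which vanishes mod $p^m$), reducing it to $bp^{m-r}(\widetilde{u}^{a}-1)\equiv 0\pmod{p^m}$; this follows from the first condition after multiplying $\widetilde{u}^{a}-1\equiv -\widetilde{u}^{a}bp^{m-r}$ by $bp^{m-r}$, so the middle condition is redundant.

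It remains to handle the two leftover requirements. Lemma \ref{b lemma} becomes $b\equiv 0,1\pmod{p^{0}}$, which is vacuous. For Proposition \ref{3 relations prop}(a), which here asserts $\widetilde{u}^{-a}\widetilde{v}^{d}\equiv 1+bp^{m-r}\pmod{p^n}$, I would note that $1+bp^{m-r}\equiv 1\pmod{p^n}$ since $m-r\geq n$, that $\widetilde{u}^{a}\equiv 1\pmod{p^n}$ always, and then combine with the asserted $\widetilde{v}^{d}\equiv 1\pmod{p^n}$ to recover the relation. Assembling these observations yields the equivalence: the two displayed congruences are exactly the surviving first and third conditions of Proposition \ref{3 relations prop}(c), while (\ref{eqn1}), Lemma \ref{b lemma}, and Proposition \ref{3 relations prop}(a) are either automatic or implied, which is precisely the claim.
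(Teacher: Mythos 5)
Your proposal is correct and follows essentially the same route as the paper: drop (\ref{eqn1}) via a valuation count on $a_0(\widetilde{u}^{-a}-1)$, note Lemma \ref{b lemma} is vacuous, specialize Proposition \ref{3 relations prop}(c) to $q=1$, and show its middle congruence and Proposition \ref{3 relations prop}(a) are implied by the remaining two. The only (immaterial) difference is that you bound $v_p(a_0)\geq n-1$ by reducing modulo $p^n$, whereas the paper gets $v_p(a_0)\geq m-r-1$ by reducing modulo $p^{m-r}$; both suffice since $r\leq n\leq m-r$.
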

\begin{proof}Suppose that $n\leq m-r$. We have
\[ u^{a_0} \equiv uS(1+p^{m-r},u^{-1}) \equiv uS(1,u^{-1})\equiv1\pmmod{p^{m-r}}\]
by definition, and this implies that $a_0\equiv0\pmod{p^{m-r-1}}$. 
Since $r\leq m-r$, together with (\ref{uv divisible}), we see that (\ref{eqn1}) always holds and so may be omitted. Note also that the condition in Lemma \ref{b lemma} is vacuous. Now, by plugging in $q=1$, the congruences in Proposition \ref{3 relations prop}(c) become
\begin{align*}
\widetilde{u}^{a}(1+bp^{m-r})&\equiv 1\pmmod{p^m},\\
\widetilde{u}^{a}bp^{m-r}& \equiv S(1+p^{m-r},bp^{m-r})\pmmod{p^m},\\
\widetilde{v}^{d} &\equiv 1 \pmmod{p^n}.
\end{align*}
By Lemma \ref{k power}, the second congruence may be rewritten as
\[ \widetilde{u}^ab \equiv b + \frac{1}{2}b(bp^{m-r}-1)p^{m-r}\equiv b\pmmod{p^r},\]
which follows from the first congruence and so may be omitted. Notice that the condition in Proposition \ref{3 relations prop}(a) may be omitted as well because it follows from the first and third congruences above. Thus, we are only left with the two stated congruences, as claimed. \end{proof}

We have left out the case $m-r < n < m$ here because we do not see any simple way of dealing with the congruences in Proposition \ref{3 relations prop}(c).

\section{Isomorphism classes of normal regular subgroups}\label{iso section}

In Section \ref{char section}, we described the $\Aut(G)$-equivariant antihomomorphisms $\Gamma$ from $G$ to $\Aut(G)$ in terms of suitable congruence conditions. To compute the order of $T(G)$, however, by Corollary \ref{T(G) cor} we only want to count those $\Gamma$ whose associated normal regular subgroup $N_\Gamma$ is isomorphic to $G$.

\vspace{1mm}

In this section, let us fix an admissible triplet $(a,b,d)$, and by definition
\[N_{\Gamma_{a,b,d}} =  \{\rho(x^i)\rho(y^j)\widetilde{\alpha}^{aj}\widetilde{\delta}^{dj}\widetilde{\beta}^{bi} : i,j\in\bZ\}.\]
We shall show that $N_{\Gamma_{a,b,d}}$ is also a split metacyclic $p$-group, isomorphic to a semidirect product of $\bZ/p^m\bZ$ and $\bZ/p^n\bZ$, but it need not be non-abelian. We shall determine the isomorphism class of $N_{\Gamma_{a,b,d}}$ by exhibiting a presentation. As an application, we give a criterion for $N_{\Gamma_{a,b,d}}$ to be isomorphic to $G$.

\vspace{1mm}

Taking $(i,j) = (1,0),(0,1)$, respectively, we obtain the elements
\[ \Phi_x = \rho(x)\widetilde{\beta}^b\mbox{ and }
\Phi_y = \rho(y)\widetilde{\alpha}^a\widetilde{\delta}^d.\]
Note that for any $\ell\in\bN$, we have
\begin{align}\label{powers}
\Phi_x^\ell & = \rho(x\widetilde{\beta}^b(x)\cdots\widetilde{\beta}^{b(\ell-1)}(x))\widetilde{\beta}^{b\ell} = \rho(x^\ell) \widetilde{\beta}^{b\ell},\\\notag
\Phi_y^\ell & = \rho(y(\widetilde{\alpha}^a\widetilde{\delta}^d)(y)\cdots (\widetilde{\alpha}^{a(\ell-1)}\widetilde{\delta}^{d(\ell-1)})(y))\widetilde{\alpha}^{a\ell}\widetilde{\delta}^{d\ell} = \rho(y^{S(\widetilde{v}^d,\ell)})\widetilde{\alpha}^{a\ell}\widetilde{\delta}^{d\ell}.
\end{align}
From this, it is clear that $\langle\Phi_x\rangle$ and $\langle\Phi_y\rangle$ intersect trivially. Since $N_{\Gamma_{a,b,d}}$ has the same order $p^{m+n}$ as $G$, the next lemma shows that $N_{\Gamma_{a,b,d}}$ is also a split metacyclic $p$-group and is given by the semidirect product $\langle\Phi_x\rangle\rtimes \langle\Phi_y\rangle$.

\begin{lem}\label{N order lemma} The elements $\Phi_x$ and $\Phi_y$, respectively, have orders $p^m$ and $p^n$. Moreover, they satisfy the relation $\Phi_y\Phi_x\Phi_y^{-1} = \Phi_x^{\widetilde{u}^{a}(1+ (1-b)p^{m-r}))}$.\end{lem}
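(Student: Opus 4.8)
The plan is to exploit the explicit power formulas in (\ref{powers}) for the two orders, and then to compute the conjugate $\Phi_y\Phi_x\Phi_y^{-1}$ directly inside $\Hol(G) = \rho(G)\rtimes\Aut(G)$, reducing it to the normal form $\rho(x^{\ast})\widetilde\beta^{\ast}$ and comparing it with the power $\Phi_x^k$ for the claimed exponent $k = \widetilde u^a(1+(1-b)p^{m-r})$.

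For the orders, I would use that an element $\rho(\sigma)\varphi$ of $\Hol(G)$ is trivial precisely when $\sigma = 1_G$ and $\varphi = \mathrm{Id}_G$. From (\ref{powers}) we have $\Phi_x^\ell = \rho(x^\ell)\widetilde\beta^{b\ell}$, so $\Phi_x^\ell = \mathrm{Id}_G$ forces $x^\ell = 1$, i.e. $p^m\mid\ell$; since $r\leq m$, this already yields $\widetilde\beta^{b\ell} = \mathrm{Id}_G$, whence $\lvert\Phi_x\rvert = p^m$. For $\Phi_y^\ell = \rho(y^{S(\widetilde v^d,\ell)})\widetilde\alpha^{a\ell}\widetilde\delta^{d\ell}$, I would observe that $\widetilde v^d\equiv1\pmod p$ (from (\ref{uv divisible}), as $\max\{r,n-r\}\geq1$) and invoke Lemma \ref{simple lemma}(b) to get that the exact power of $p$ dividing $S(\widetilde v^d,\ell)$ equals that dividing $\ell$; hence $y^{S(\widetilde v^d,\ell)} = 1$ is equivalent to $p^n\mid\ell$. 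Because $r\leq n$ and $\min\{r,n-r\}\leq n$, any such $\ell$ also kills $\widetilde\alpha^{a\ell}$ and $\widetilde\delta^{d\ell}$, so $\lvert\Phi_y\rvert = p^n$.

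For the relation, writing $\varphi = \widetilde\alpha^a\widetilde\delta^d$ and using $\varphi\rho(x)\varphi^{-1} = \rho(\varphi(x)) = \rho(x^{\widetilde u^a})$ together with the conjugation rules in (\ref{conjugation}) (which give $\widetilde\delta^d\widetilde\beta^b\widetilde\delta^{-d} = \widetilde\beta^{b\widetilde v^{-d}}$ and $\widetilde\alpha^a\widetilde\beta^b\widetilde\alpha^{-a} = \widetilde\beta^{b\widetilde u^a}$), I would first obtain $\varphi\Phi_x\varphi^{-1} = \rho(x^{\widetilde u^a})\widetilde\beta^{e}$ with $e = b\widetilde u^a\widetilde v^{-d}$. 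Conjugating by $\rho(y)$ and pushing $\widetilde\beta^e$ past $\rho(y^{-1})$ via $\widetilde\beta^e(y) = x^{ep^{m-r}}y$ and the defining relation $yx^sy^{-1} = x^{s(1+p^{m-r})}$ from (\ref{metacyclic G}), the $\rho$-parts collapse into a single power of $x$, and I expect to arrive at
\[ \Phi_y\Phi_x\Phi_y^{-1} = \rho\bigl(x^{\widetilde u^a(1+p^{m-r}) - ep^{m-r}}\bigr)\widetilde\beta^{e}. \]

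It then remains to match this against $\Phi_x^k = \rho(x^k)\widetilde\beta^{bk}$, which amounts to two congruences modulo $p^r$: namely $e\equiv\widetilde u^ab\pmod{p^r}$ for the $\rho$-component (so that the $x$-exponents agree modulo $p^m$) and $e\equiv bk\pmod{p^r}$ for the $\widetilde\beta$-component (recall $\lvert\widetilde\beta\rvert = p^r$). The first is immediate from $\widetilde v^{-d}\equiv1\pmod{p^r}$, which follows from (\ref{uv divisible}) since $\max\{r,n-r\}\geq r$. The second reduces, after cancelling the unit $\widetilde u^a$ and subtracting, to $b(1-b)\equiv0\pmod{p^{\max\{2r-m,0\}}}$, and this is exactly where admissibility enters: I would invoke Lemma \ref{b lemma}, which gives $b\equiv0,1\pmod{p^{\max\{2r-m,0\}}}$. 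This final reduction modulo $p^r$ is the only genuinely delicate point, as it is the one place where the hypothesis that $(a,b,d)$ be \emph{admissible} (rather than merely pre-admissible) is needed; everything else is a careful bookkeeping of the semidirect-product multiplication in $\Hol(G)$.
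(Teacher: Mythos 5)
Your proposal is correct and follows essentially the same route as the paper: both compute $\Phi_y\Phi_x\Phi_y^{-1}$ in the semidirect product $\rho(G)\rtimes\Aut(G)$ using the relations in (\ref{conjugation}), reduce the $\widetilde{\beta}$-exponent via $\widetilde{v}\equiv1\pmod{p^r}$, and invoke Lemma \ref{b lemma} at exactly the same (and only delicate) point to match the $\widetilde{\beta}$-components of the conjugate and of $\Phi_x^{\widetilde{u}^a(1+(1-b)p^{m-r})}$. The order computation likewise matches the paper's appeal to (\ref{powers}), (\ref{abd orders}), and Lemma \ref{simple lemma}(b).
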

\begin{proof}The first claim follows from (\ref{powers}), (\ref{abd orders}), and Lemma \ref{simple lemma}(b). To prove the relation, we compute that
\begin{align*}
\Phi_y\Phi_x\Phi_y^{-1}
& = \rho(y)\widetilde{\alpha}^a\widetilde{\delta}^d \cdot \rho(x)\widetilde{\beta}^b\cdot (\rho(y)\widetilde{\alpha}^a\widetilde{\delta}^d)^{-1}\\
& = \rho(y) \rho((\widetilde{\alpha}^a\widetilde{\delta}^d)(x))\cdot \widetilde{\alpha}^a\widetilde{\delta}^d\widetilde{\beta}^b\widetilde{\delta}^{-d}\widetilde{\alpha}^{-a}\cdot \rho(y^{-1})\\
& = \rho(yx^{\widetilde{u}^a})\widetilde{\beta}^{b\widetilde{u}^a\widetilde{v}^{-d}} \rho(y^{-1}),
\end{align*}
where we used (\ref{conjugation}) in the last equality. But $\widetilde{v}\equiv1\pmod{p^r}$, and we know that $\widetilde{\beta}$ has order $p^r$. We then see that
\begin{align*}
\Phi_y\Phi_x\Phi_y^{-1}& =  \rho(yx^{\widetilde{u}^a})\rho(\widetilde{\beta}^{b\widetilde{u}^a}(y)^{-1})\widetilde{\beta}^{b\widetilde{u}^a} \\
& = \rho(yx^{\widetilde{u}^a}(x^{b\widetilde{u}^ap^{m-r}}y)^{-1}) \widetilde{\beta}^{b\widetilde{u}^a}\\
& = \rho(x^{\widetilde{u}^a(1+p^{m-r})}\cdot x^{-b\widetilde{u}^ap^{m-r}})\widetilde{\beta}^{b\widetilde{u}^a}\\
& = \rho(x^{\widetilde{u}^a(1 + (1-b)p^{m-r})})\widetilde{\beta}^{b\widetilde{u}^a}.
\end{align*}
Recall that $bp^{m-r}\equiv\ep p^{m-r}\pmod{p^r}$ with $\ep \in\{0,1\}$ by Lemma \ref{b lemma}, and so
\[ b(1-b)p^{m-r} \equiv \ep(1-\ep)p^{m-r}\equiv 0\pmmod{p^r}.\]
Since $\widetilde{\beta}$ has order $p^r$, we see that indeed
\begin{align*}
 \Phi_y\Phi_x\Phi_y^{-1} &= \rho(x^{\widetilde{u}^{a}(1+ (1-b)p^{m-r}))})\widetilde{\beta}^{b\widetilde{u}^{a}(1+ (1-b)p^{m-r}))} \\
 & = \Phi_x^{\widetilde{u}^{a}(1+ (1-b)p^{m-r}))},\end{align*}
where the second equality follows from (\ref{powers}).
\end{proof}

To determine whether $N_{\Gamma_{a,b,d}}$ is isomorphic to $G$, first note that
\[ p^{m-r+s_{a,b,d}} \parallel \widetilde{u}^{a}(1+ (1-b)p^{m-r}))-1\mbox{ for some }s_{a,b,d}\in\bN_{\geq 0}\]
by (\ref{uv divisible}), and there exists $j_{a,b,d}\in\bN$ coprime to $p$ such that
\begin{equation}\label{jabd}(\widetilde{u}^{a}(1+ (1-b)p^{m-r}))^{j_{a,b,d}} \equiv 1 + p^{m-r+s_{a,b,d}}\pmmod{p^{m}}\end{equation}
by Lemma \ref{simple lemma}(a). Setting $\Phi_y' = \Phi_y^{j_{a,b,d}}$, we then obtain:

\begin{cor}\label{iso cor} The group $N_{\Gamma_{a,b,d}}$ admits the presentation
\[ N_{\Gamma_{a,b,d}} = \langle \Phi_x,\Phi_y': \Phi_x^{p^m} =1,\,  (\Phi_y')^{p^n} = 1,\, (\Phi_y')\Phi_x(\Phi_y')^{-1} = \Phi_x^{1+p^{m-r+s_{a,b,d}}}\rangle,\]
which is isomorphic to $G$ if and only if $s_{a,b,d}=0$, that is
\begin{equation}\label{iso criterion}\widetilde{u}^{a}(1+ (1-b)p^{m-r})) \not\equiv 1\pmmod{p^{m-r+1}}.\end{equation}
\end{cor}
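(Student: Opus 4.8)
The plan is to verify the displayed presentation first and then read off the isomorphism type by appealing to the uniqueness in \cite{King}. Write $w = \widetilde{u}^{a}(1+(1-b)p^{m-r})$ for the conjugation exponent produced by Lemma \ref{N order lemma}, so that $\Phi_y\Phi_x\Phi_y^{-1} = \Phi_x^{w}$. Since $j_{a,b,d}$ is coprime to $p$ and $\Phi_y$ has order $p^n$ by Lemma \ref{N order lemma}, the element $\Phi_y' = \Phi_y^{j_{a,b,d}}$ again has order $p^n$ and generates $\langle\Phi_y\rangle$; hence $\langle\Phi_x,\Phi_y'\rangle = N_{\Gamma_{a,b,d}}$, and the first two relations $\Phi_x^{p^m}=1$ and $(\Phi_y')^{p^n}=1$ hold automatically. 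A straightforward induction on $j$ upgrades the relation of Lemma \ref{N order lemma} to $\Phi_y^{j}\Phi_x\Phi_y^{-j} = \Phi_x^{w^{j}}$, and then the defining relation (\ref{jabd}) of $j_{a,b,d}$ together with $\lvert\Phi_x\rvert = p^m$ gives $\Phi_y'\Phi_x(\Phi_y')^{-1} = \Phi_x^{w^{j_{a,b,d}}} = \Phi_x^{1+p^{m-r+s_{a,b,d}}}$, which is exactly the third relation claimed.

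To see that these really are \emph{defining} relations, I would note that $N_{\Gamma_{a,b,d}} = \langle\Phi_x\rangle\rtimes\langle\Phi_y'\rangle$ has order $p^{m+n}$, whereas the abstract group presented by $\langle X,Y : X^{p^m}=1,\,Y^{p^n}=1,\,YXY^{-1}=X^{1+p^{m-r+s_{a,b,d}}}\rangle$ has order at most $p^{m+n}$, since its relations let one rewrite every word as $X^iY^j$ with $0\le i<p^m$ and $0\le j<p^n$. The assignment $X\mapsto\Phi_x$, $Y\mapsto\Phi_y'$ defines a surjection from this abstract group onto $N_{\Gamma_{a,b,d}}$ because $\Phi_x,\Phi_y'$ satisfy the relations, and comparing orders forces it to be an isomorphism. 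Consistency of the presentation, namely that $1+p^{m-r+s_{a,b,d}}$ has multiplicative order dividing $p^n$ modulo $p^m$, is inherited for free from the genuine group $N_{\Gamma_{a,b,d}}$.

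For the isomorphism criterion, I would observe that this presentation is precisely of the form (\ref{metacyclic G}) with $r$ replaced by $r-s_{a,b,d}$, since $m-r+s_{a,b,d} = m-(r-s_{a,b,d})$. It is an admissible King presentation, with the shifted parameter lying in the permitted range $1\le r-s_{a,b,d}\le\min\{n,m-1\}$, exactly when $s_{a,b,d}<r$; in that case $N_{\Gamma_{a,b,d}}$ is non-abelian. When $s_{a,b,d}\ge r$ we instead have $w\equiv 1\pmod{p^m}$, so $N_{\Gamma_{a,b,d}}$ is abelian and cannot be isomorphic to the non-abelian $G$. In the remaining case, the uniqueness of the presentation in \cite{King} shows that $N_{\Gamma_{a,b,d}}\cong G$ if and only if the parameters agree, i.e.\ $r-s_{a,b,d}=r$, that is $s_{a,b,d}=0$. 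Unwinding the definition $p^{m-r+s_{a,b,d}}\parallel w-1$, the equality $s_{a,b,d}=0$ says precisely that $w\not\equiv 1\pmod{p^{m-r+1}}$, which is condition (\ref{iso criterion}).

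The conjugation computation is routine once Lemma \ref{N order lemma} and (\ref{jabd}) are in hand, so the only genuinely delicate point, and the one I would watch most carefully, is the bookkeeping around King's uniqueness in the last step: one must confirm that the shifted parameter $r-s_{a,b,d}$ lands inside the range allowed in (\ref{metacyclic G}) and separately dispose of the degenerate abelian case $s_{a,b,d}\ge r$, so that the ``if and only if'' is not broken by a boundary value.
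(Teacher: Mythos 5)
Your proof is correct and follows the same route as the paper: the paper's own proof simply states that the presentation is clear from Lemma \ref{N order lemma} and that the isomorphism criterion follows from the uniqueness of the presentation (\ref{metacyclic G}) established in \cite{King}. The details you supply --- the order-counting argument showing the listed relations are defining, the separate disposal of the degenerate case $s_{a,b,d}\ge r$ in which $N_{\Gamma_{a,b,d}}$ is abelian, and the check that the shifted parameter $r-s_{a,b,d}$ lies in King's admissible range --- are precisely the points the paper leaves implicit, and your bookkeeping there is sound.
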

\begin{proof}The first claim is clear from Lemma \ref{N order lemma}, and the second claim follows from \cite{King}, which tells us that the presentation (\ref{metacyclic G}) is unique.\end{proof}

The above presentation of $N_{\Gamma_{a,b,d}}$ may be also be used to explicitly describe the element in $T(G)$ which $N_{\Gamma_{a,b,d}}$ corresponds to when $N_{\Gamma_{a,b,d}}$ is isomorphic to $G$. Note that when $s_{a,b,d}=0$, we have a well-defined isomorphism
\[ \lambda(G) \longrightarrow N_{\Gamma_{a,b,d}};\hspace{1em}\begin{cases}
\lambda(x)\mapsto \Phi_x,\\
\lambda(y)\mapsto \Phi_y^{j_{a,b,d}}.
\end{cases}\]
Then, as shown in the proof of \cite[Lemma 2.1]{Tsang ASG}, this implies that
\begin{equation}\label{pi N} N_{\Gamma_{a,b,d}} = \pi_{a,b,d}\lambda(G)\pi_{a,b,d}^{-1},\end{equation}
where $\pi_{a,b,d}$ is the bijection defined by
\[ \pi_{a,b,d}:G\longrightarrow G;\hspace{1em}\pi_{a,b,d}(x^iy^j) = (\Phi_x^i\Phi_y^{j_{a,b,d}j})(1).\]
In particular, the element in $T(G)$ which $N_{\Gamma_{a,b,d}}$ gives rise to is $\pi_{a,b,d}\Hol(G)$. Note that $\pi_{a,b,d}$ depends on the choice of $j_{a,b,d}$, which is only unique modulo $p^r$. But say $\pi_{a,b,d}'$ is the bijection arising from a different choice $j_{a,b,d}'$. Then, we have  $j_{a,b,d}^{-1}j_{a,b,d}' \equiv1\pmod{p^r}$, so there exists $\varphi\in\Aut(G)$ which is a power of $\delta$ such that $\varphi(x) = x$ and $\varphi(y) = y^{j_{a,b,d}^{-1}j_{a,b,d}'}$. We see that
\[ \pi_{a,b,d}' = \pi_{a,b,d}\circ\varphi \mbox{ and so }\pi_{a,b,d}'\equiv \pi_{a,b,d}\pmmod{\Aut(G)}.\]
Thus, the element $\pi_{a,b,d}\Hol(G)$ in $T(G)$, which is what we care about, does not depend on the choice of $j_{a,b,d}$. Let us end this section by computing the explicit action of $\pi_{a,b,d}$.

\begin{prop}\label{pi prop}If $s_{a,b,d}=0$, then with a fixed choice of $j_{a,b,d}$, we have
\[ \pi_{a,b,d}(x^iy^j)= x^{-i(1+bp^{m-r}S(k,S(\widetilde{v}^d,j_0j)))k^{-S(\widetilde{v}^d,j_0j)}}y^{-S(\widetilde{v}^d,j_0j)},\]
where we define $k = 1 + p^{m-r}$ and $j_0 = j_{a,b,d}$.
\end{prop}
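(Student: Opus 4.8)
The plan is to evaluate the permutation $\Phi_x^i\Phi_y^{j_0 j}$ at the identity $1_G$ directly, working from the rightmost factor inward, and then to rewrite the answer in the normal form $x^?y^?$. First I would invoke the power formulas in (\ref{powers}) to record $\Phi_y^{j_0 j} = \rho(y^{S(\widetilde{v}^d,j_0 j)})\widetilde{\alpha}^{aj_0 j}\widetilde{\delta}^{dj_0 j}$ and $\Phi_x^i = \rho(x^i)\widetilde{\beta}^{bi}$, abbreviating $S_y = S(\widetilde{v}^d,j_0 j)$. Since every element of $\Aut(G)$ fixes $1_G$ while $\rho(\sigma)(1_G) = \sigma^{-1}$, evaluating $\Phi_y^{j_0 j}$ at $1_G$ collapses the two automorphism factors and leaves only $\eta(1_G) = y^{-S_y}$.

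Next I would apply $\Phi_x^i$ to this element, remembering that in the composition $\rho(x^i)\widetilde{\beta}^{bi}$ the automorphism $\widetilde{\beta}^{bi}$ acts first. Using $\widetilde{\beta}(x) = x$ and $\widetilde{\beta}(y) = x^{p^{m-r}}y$ one obtains $\widetilde{\beta}^{bi}(y) = x^{bip^{m-r}}y$, hence $\widetilde{\beta}^{bi}(y^{-S_y}) = (x^{bip^{m-r}}y)^{-S_y}$. The crucial tool here is the power identity $(x^Iy^J)^\ell = x^{I\,S((1+p^{m-r})^J,\ell)}y^{J\ell}$ stated before Lemma \ref{simple lemma}: with $I = bip^{m-r}$, $J = 1$, $\ell = S_y$, and writing $k = 1+p^{m-r}$, it gives $(x^{bip^{m-r}}y)^{S_y} = x^{bip^{m-r}S(k,S_y)}y^{S_y}$, whose inverse is $y^{-S_y}x^{-bip^{m-r}S(k,S_y)}$. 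Applying $\rho(x^i)$ is just right multiplication by $x^{-i}$, which yields $y^{-S_y}x^{-i(1+bp^{m-r}S(k,S_y))}$.

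Finally I would convert this into the normal form $x^?y^?$ via the defining relation $yxy^{-1} = x^k$, which propagates to $y^sx^t = x^{tk^s}y^s$ for all integers $s,t$. Taking $s = -S_y$ moves the $x$-power to the left at the cost of multiplying its exponent by $k^{-S_y}$, producing exactly $\pi_{a,b,d}(x^iy^j) = x^{-i(1+bp^{m-r}S(k,S_y))\,k^{-S_y}}y^{-S_y}$, which is the claimed formula after substituting $S_y = S(\widetilde{v}^d,j_0 j)$.

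The argument is essentially mechanical once the composition convention and the reduction (\ref{powers}) are in place; the only genuine care is in handling the non-commutativity of $G$. The main obstacle is the bookkeeping in the last step, namely correctly commuting $x$-powers past $y^{-S_y}$ with the factor $k^{-S_y}$ (read modulo $p^m$, where $k$ is a unit since $k\equiv1\pmod p$), together with the correct use of the power formula at the negative exponent $-S_y$, which I would handle by computing the positive power and inverting. Note that the hypothesis $s_{a,b,d}=0$ plays no role in the computation itself: it is needed only to ensure, as set up before the proposition, that $\pi_{a,b,d}$ is the bijection conjugating $\lambda(G)$ onto $N_{\Gamma_{a,b,d}}$, so that the derived formula really is the action of the corresponding element of $T(G)$.
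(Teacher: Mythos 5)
Your proposal is correct and follows essentially the same route as the paper: evaluate $(\Phi_x^i\Phi_y^{j_0j})(1)$ using the power formulas (\ref{powers}), note that the automorphism factors fix $1$ so that $\Phi_y^{j_0j}(1)=y^{-S(\widetilde{v}^d,j_0j)}$, then apply $\rho(x^i)\widetilde{\beta}^{bi}$ via the identity $(x^Iy^J)^\ell = x^{IS(k^J,\ell)}y^{J\ell}$ and commute the $x$-power past $y^{-S(\widetilde{v}^d,j_0j)}$. The only difference is that you write out the final normal-form step that the paper compresses into ``which simplifies to the desired expression.''
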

\begin{proof}From (\ref{powers}), we see that
\begin{align*}
\pi_{a,b,d}(x^iy^j) & = (\rho(x^i)\widetilde{\beta}^{bi} \rho(y^{S(\widetilde{v}^d,j_0j)})\widetilde{\alpha}^{aj_0j}\widetilde{\delta}^{dj_0j})(1)\\
& = (\rho(x^i)\widetilde{\beta}^{bi})(y^{-S(\widetilde{v}^d,j_0j)})\\
& = (x^{bp^{m-r}i}y)^{-S(\widetilde{v}^d,j_0j)} x^{-i}\\
& = y^{-S(\widetilde{v}^d,j_0j)} x^{-bp^{m-r}iS(k,S(\widetilde{v}^d,j_0j))} x^{-i},
\end{align*}
which simplifies to the desired expression.
\end{proof}

\begin{remark}Recall Remarks \ref{remark1} and \ref{remark2}. Notice that we may take $j_{0,0,0} = 1$ and $j_{a_\lambda,1,0} = -1 + p^n$. With these choices, we have
\begin{align*} \pi_{0,0,0}(x^iy^j) & = x^{-ik^{-j}}y^{-j} = (x^iy^j)^{-1},\\
\pi_{a_\lambda,1,0}(x^iy^j) & = x^{-i(1+p^{m-r}S(k,(-1+p^n)j))k^{j}}y^{j} =  x^{-i}y^j,
\end{align*}
where $k=1+p^{m-r}$ as in Proposition \ref{pi prop}, and the last equality holds since
\[ 1 + p^{m-r}S(k,(-1+p^n)j) = 1 + (k-1)S(k,(-1+p^n)j) =  k^{(-1 + p^n)j}.\]
It is then easy to check that
\begin{align*}
\rho(x^iy^j)& = \pi_{0,0,0} \lambda(x^iy^j) \pi_{0,0,0}^{-1},\\
 \lambda(x^{-i}y^j)&= \pi_{a_\lambda,1,0}\lambda(x^iy^j)\pi_{a_\lambda,1,0}^{-1} .
\end{align*}
This verifies (\ref{pi N}) in these two special cases.
\end{remark}

\section{Counting residue classes of admissible triplets}\label{count section}

In this section, we shall prove Theorem \ref{main thm}. First, by Proposition \ref{abd prop}, we know that the $\Aut(G)$-equivariant antihomomorphisms $\Gamma$ from $G$ to $\Aut(G)$ are exactly the $\Gamma_{a,b,d}$ for $(a,b,d)$ ranging over all admissible triplets. Clearly, the definition of $\Gamma_{a,b,d}$ is uniquely determined by the class of  $(a,b,d)$ modulo
\[ \mathbb{M} = p^r\bZ\times p^r\bZ\times p^{\min\{r,n-r\}}\bZ\]
because of (\ref{abd orders}). With Corollaries \ref{T(G) cor} and \ref{iso cor}, we then deduce that
\[ |T(G)| = \#\{\mbox{admissible $(a,b,d)$ mod $\mathbb{M}$ such that (\ref{iso criterion}) holds}\}.\]
In the next two subsections, we shall compute this number using Corollaries \ref{criterion cor} and \ref{criterion cor'}, respectively, in the cases that $m\leq n$ and $n\leq m-r$. We shall in fact first count the number of admissible triplets $(a,b,d)$ modulo $\mathbb{M}$, and then explain how imposing the extra restriction (\ref{iso criterion}) affects the argument.

\vspace{1mm}

Let us first make a change of variables. For any $a\in \bZ$, by (\ref{uv divisible}) and Lemma \ref{simple lemma}(a), there exists $\mu_a\in \bZ$ such that
\begin{equation}\label{mu} \widetilde{u}^a \equiv 1 + \mu_ap^{m-r}\pmmod{p^m},\end{equation}
and $\widetilde{u}$ mod $p^m$ has multiplicative order $p^r$. We then see that
\[ \bZ/p^r\bZ \longrightarrow \bZ/p^r\bZ;\hspace{1em}a+p^r\bZ\mapsto \mu_a+p^r\bZ\]
is a well-defined bijection. With this notation, we have
\[ \widetilde{u}^a(1 + (1-b)p^{m-r}) \equiv 1 + (1 + \mu_a - b)p^{m-r}\pmmod{p^{m-r+1}}.\]
This implies that (\ref{iso criterion}) holds if and only if
\begin{equation}\label{iso criterion'}
1 + \mu_a - b\not\equiv 0\pmmod{p},
\end{equation}
which is much easier to work with.

\subsection{The case $m\leq n$} In this subsection, assume that $m\leq n$. Recall from Corollary \ref{criterion cor} that a triplet $(a,b,d)$ is admissible if and only if
\begin{align}\label{cond1}
\widetilde{u}^{-a}\widetilde{v}^d &\equiv 1 +bp^{m-r}\pmmod{p^m}\\\label{cond2}
b& \equiv0\mbox{ or }1\pmmod{p^{\max\{2r-m,0\}}}
\end{align}
are satisfied. Our strategy is to first choose $b$ satisfying (\ref{cond2}), and then pick $a$ such that (\ref{cond1}) has a solution in $d$.

\begin{prop}\label{count prop} The number of admissible triplets modulo $\mathbb{M}$ is equal to
\[\begin{cases}
2p^{m-r + \min\{r,n-r\}} & \mbox{if $m-r < r$},\\
p^{r+\min\{r,n-r\}} & \mbox{if $r\leq m-r$}.
\end{cases}\]
\end{prop}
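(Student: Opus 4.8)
The plan is to count the admissible triplets $(a,b,d)$ modulo $\mathbb{M} = p^r\bZ\times p^r\bZ\times p^{\min\{r,n-r\}}\bZ$ using the criterion from Corollary \ref{criterion cor}, namely conditions (\ref{cond1}) and (\ref{cond2}), by building up the count one coordinate at a time in the order $b$, then $a$, then $d$. First I would count the choices of $b$ modulo $p^r$ satisfying (\ref{cond2}). When $r\leq m-r$, condition (\ref{cond2}) is vacuous (since $\max\{2r-m,0\}=0$), so $b$ ranges freely over all $p^r$ residues modulo $p^r$; when $m-r<r$, the constraint $b\equiv0,1\pmod{p^{2r-m}}$ leaves exactly $2\cdot p^{r-(2r-m)} = 2p^{m-r}$ admissible values of $b$ modulo $p^r$.

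The next step is to fix an admissible $b$ and count, for each such $b$, the pairs $(a,d)$ satisfying (\ref{cond1}). The key observation is to separate the congruence (\ref{cond1}) into its behavior modulo $p^{m-r}$ and its behavior at the higher $p$-adic levels, exploiting (\ref{uv divisible}): we have $p^{m-r}\parallel\widetilde{u}-1$ and $p^{\max\{r,n-r\}}\parallel\widetilde{v}-1$, and since $m\leq n$ forces $\max\{r,n-r\}\geq r\geq m-r$, both $\widetilde{u}^{-a}$ and $\widetilde{v}^d$ are congruent to $1$ modulo $p^{m-r}$. Thus (\ref{cond1}) read modulo $p^{m-r}$ is automatic, and the real content lies in the top $r$ levels. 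Using the change of variables (\ref{mu}), rewrite $\widetilde{u}^{-a}\equiv 1-\mu_a p^{m-r}$ (up to higher-order corrections) and similarly expand $\widetilde{v}^d$; then (\ref{cond1}) becomes a congruence determining the allowable relationship between $a$, $d$, and the chosen $b$. Since $a\mapsto\mu_a$ is a bijection modulo $p^r$, I expect that for each fixed $b$ the number of valid $a$ modulo $p^r$ is determined by how the $d$-dependence fills out the residues, and I would count the pairs $(a,d)$ modulo $p^r\times p^{\min\{r,n-r\}}$ accordingly.

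The main obstacle will be correctly bookkeeping the interaction between $a$ and $d$ in the single congruence (\ref{cond1}): because $\widetilde{v}^d$ ranges over a cyclic group of order $p^{\min\{r,n-r\}}$ (the order of $\widetilde\delta$ by (\ref{abd orders})) with $\widetilde v-1$ exactly divisible by $p^{\max\{r,n-r\}}$, the contribution of $d$ to the top levels of (\ref{cond1}) lands in a specific subgroup, and I must determine for how many $a$ (equivalently $\mu_a$) modulo $p^r$ the target value $1+bp^{m-r}-\widetilde u^{-a}$ is actually hit by some $\widetilde v^d$. The cleanest route is to first pick $a$ freely (all $p^r$ values modulo $p^r$) and ask when (\ref{cond1}) is \emph{solvable} in $d$; this is a divisibility condition on $1+bp^{m-r}-\widetilde u^{-a}$ that, combined with the exact power $p^{\max\{r,n-r\}}\parallel\widetilde v-1$, should either always be solvable (contributing a factor equal to the number of $d$ in a fiber, namely $p^{\min\{r,n-r\}}$ divided by the image size) or impose a mild further constraint on $a$.

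Assembling the pieces, the total count is the product of the number of admissible $b$ (computed in the first step) with the number of pairs $(a,d)$ per $b$ (computed in the second step). In the case $r\leq m-r$ I anticipate $p^r$ choices of $b$ times $p^{\min\{r,n-r\}}$ essentially-free choices governing the $(a,d)$ solvability, yielding $p^{r+\min\{r,n-r\}}$; in the case $m-r<r$ I anticipate $2p^{m-r}$ choices of $b$ times $p^{\min\{r,n-r\}}$, yielding $2p^{m-r+\min\{r,n-r\}}$. I would double-check the per-$b$ factor is independent of which admissible $b$ is chosen, since the congruence (\ref{cond1}) depends on $b$ only through the term $bp^{m-r}$, which shifts the target by an element already controlled by the same power of $p$; this independence is what makes the final answer a clean product and matches the two cases stated in Proposition \ref{count prop}.
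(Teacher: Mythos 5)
Your proposal follows essentially the same route as the paper's proof: first count the residues $b$ modulo $p^r$ satisfying (\ref{cond2}) (getting $2p^{m-r}$ when $m-r<r$ and $p^r$ when $r\leq m-r$), then show that for each such $b$ the pairs $(a,d)$ solving (\ref{cond1}) number exactly $p^{\min\{r,n-r\}}$, by turning solvability in $d$ into a congruence constraint on $\mu_a$ and then counting the $d$-fiber. The one point you leave as an expectation --- that (number of admissible $\mu_a$ mod $p^r$) times (size of a $d$-fiber) equals $p^{\min\{r,n-r\}}$ independently of $b$ --- is precisely the computation the paper carries out: since $p^{\max\{r,n-r\}}\parallel\widetilde{v}-1$, the constraint on $\mu_a$ has index $p^{\max\{m-\max\{r,n-r\},0\}}$, which coincides with the multiplicative order of $\widetilde{v}$ modulo $p^m$, so the product telescopes to $p^{\min\{r,n-r\}}$ and the two cases assemble exactly as you anticipate.
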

\begin{proof}Let us first use (\ref{mu}) to rewrite (\ref{cond1}) as
\[ \widetilde{v}^d \equiv 1 + (\mu_a +b)p^{m-r} + \mu_abp^{2(m-r)}\pmmod{p^m}.\]
We then see that (\ref{cond1}) has a solution in $d$ if and only if
\[ (\mu_a + b )p^{m-r} + \mu_abp^{2(m-r)}\equiv 0\pmmod{p^{\min\{m,\max\{r,n-r\}\}}}\]
because $p^{\max\{r,n-r\}}\parallel \widetilde{v}-1$ by (\ref{uv divisible}). The above is equivalent to
\[\mu_a \equiv -b(1+bp^{m-r})^{-1}\pmod{p^{\min\{r,\max\{r,n-r\}-(m-r)\}}}.\]
This means that once $b$ is fixed, we have 
\[\label{a choices}p^{r-\min\{r,\max\{r,n-r\}-(m-r)\}} = p^{\max\{m- \max\{r,n-r\},0\}}\]
choices for $\mu_a$ and thus $a$ modulo $p^r$. Note that $\widetilde{v}$ mod $p^m$ has multiplicative order $p^{\max\{m-\max\{r,n-r\},0\}}$ by Lemma \ref{simple lemma}(a). Hence, once both $b$ and $\mu_a$ have been chosen, we have
\[ p^{\min\{r,n-r\} - \max\{m-\max\{r,n-r\},0\}} = p^{\min\{r,n-m\}}\]
choices for $d$ modulo $p^{\min\{r,n-r\}}\bZ$. Since there are
\[\label{b choices}\begin{cases}
2p^{m-r} &\mbox{if $m-r<r$}\\
p^r & \mbox{if $r\leq m-r$}
\end{cases}\]
choices for $b$ modulo $p^r$ which satisfy (\ref{cond2}), and
\[ p^{\max\{m- \max\{r,n-r\},0\}}\cdot p^{\min\{r,n-m\}} = p^{\min\{r,n-r\}},\]
the total number of admissible triplets $(a,b,d)$ modulo $\mathbb{M}$ is as claimed.\end{proof}

The case $r\leq m-r$ may actually proven via a different and much simpler argument, as follows.

\begin{prop}\label{count prop'} The number of admissible triplets modulo $\mathbb{M}$ is equal to
\[p^{r+\min\{r,n-r\}}\hspace{1em}\mbox{if $r\leq m-r$}.\]
\end{prop}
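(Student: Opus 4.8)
The plan is to give a direct bijective count in the case $r \leq m-r$, bypassing the case-by-case arithmetic of Proposition \ref{count prop} by exploiting the fact that the divisibility conditions simplify considerably in this range. Recall from Corollary \ref{criterion cor} that under $m \leq n$, admissibility of $(a,b,d)$ is governed by the two congruences
\[
\widetilde{u}^{-a}\widetilde{v}^d \equiv 1 + bp^{m-r} \pmmod{p^m}
\qquad\text{and}\qquad
b \equiv 0,1 \pmmod{p^{\max\{2r-m,0\}}}.
\]
When $r \leq m-r$ we have $2r - m \leq 0$, so the second congruence is vacuous and $b$ ranges freely over $\bZ/p^r\bZ$, giving $p^r$ choices. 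The entire problem thus reduces to counting, for each fixed $b$, the pairs $(a,d)$ modulo $(p^r, p^{\min\{r,n-r\}})$ satisfying the first congruence.

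First I would use the change of variables (\ref{mu}), writing $\widetilde{u}^{-a} \equiv 1 + \mu_{-a}p^{m-r} \pmod{p^m}$, to recast the first congruence as an equation expressing $\widetilde{v}^d$ in terms of $b$ and $\mu_{-a}$. Since $a \mapsto \mu_a$ is a bijection on $\bZ/p^r\bZ$, I am free to treat $\mu_{-a}$ as the independent parameter. The key structural point I want to extract is that, once $b$ is fixed, the map sending the pair $(a,d)$ to the value $\widetilde{u}^{-a}\widetilde{v}^d \bmod p^m$ lands in the subgroup $1 + p^{m-r}\bZ/p^m\bZ$ of the units, which is cyclic of order $p^r$; I would identify both $\langle\widetilde{u}\rangle$ and $\langle\widetilde{v}\rangle$ (reduced mod the relevant modulus) as subgroups of this group, using (\ref{uv divisible}) and Lemma \ref{simple lemma}(a) to pin down their orders. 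The target value $1 + bp^{m-r}$ also lies in this cyclic group, so the count of solutions $(a,d)$ becomes a count of solutions in an abelian group of the equation $\widetilde{u}^{-a}\widetilde{v}^d = (\text{target})$.

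Carrying this out, the number of admissible $(a,d)$ for each fixed $b$ should be the size of the fiber of the homomorphism $(a,d) \mapsto \widetilde{u}^{-a}\widetilde{v}^d$, which by a counting of domain over image is $p^{r}\cdot p^{\min\{r,n-r\}} / (\text{image size})$ whenever the target lies in the image. The cleaner route, and the one I would actually write, is to first solve for $d$ given $a$: since $p^{\max\{r,n-r\}} \parallel \widetilde{v}-1$ and $r \leq m-r$ forces $\max\{r,n-r\} = n-r$ (as $m \leq n$), the condition for solvability in $d$ and the number of resulting $d$ both simplify, and one checks that the product of the number of admissible $\mu_{-a}$ and the number of $d$ is independent of $b$ and equals $p^{\min\{r,n-r\}}$. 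Multiplying by the $p^r$ free choices of $b$ yields $p^{r+\min\{r,n-r\}}$.

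The main obstacle I anticipate is purely bookkeeping: correctly tracking which modulus controls solvability in $d$ versus which counts the solutions, and verifying that under the hypothesis $r \leq m-r$ (together with $m \leq n$) the various nested $\min$/$\max$ expressions appearing in (\ref{uv divisible}) and in the proof of Proposition \ref{count prop} collapse to the clean values $\max\{r,n-r\} = n-r$ and $\min\{r,\max\{2r-m,n-m\}\} = \min\{r,n-m\}$ with $2r-m \leq 0$. Once these simplifications are made explicit at the outset, the "simpler argument" promised in the statement is essentially the observation that the constraint on $b$ disappears and the $(a,d)$-count becomes a single homomorphism-fiber computation, so no delicate case analysis on the relative sizes of $r$ and $n-r$ is needed beyond recording the two subcases $n-r \leq r$ and $r \leq n-r$ for the final exponent $\min\{r,n-r\}$.
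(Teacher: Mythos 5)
Your proposal is correct, but it is organized in the opposite direction from the paper's own proof of this proposition. The paper's argument is a two-line shortcut: since $p^{m-r}\parallel\widetilde{u}-1$ and $p^{n-r}\parallel\widetilde{v}-1$ with $n-r\geq m-r$, the left-hand side of (\ref{cond1}) is automatically $\equiv 1\pmod{p^{m-r}}$, so for \emph{every} pair $(a,d)$ there is a \emph{unique} $b$ modulo $p^r$ solving (\ref{cond1}); as (\ref{cond2}) is vacuous when $r\leq m-r$, the count is immediately $p^r\cdot 1\cdot p^{\min\{r,n-r\}}$ with no congruence-solving at all. You instead fix $b$ first ($p^r$ free choices) and count the pairs $(a,d)$, either as the fiber of the homomorphism $(a,d)\mapsto \widetilde{u}^{-a}\widetilde{v}^d$ over the target $1+bp^{m-r}$, or sequentially as in Proposition \ref{count prop}. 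Both computations check out: the image of that homomorphism modulo $p^m$ is exactly $\langle\widetilde{u}\rangle$, the full subgroup $\{1+cp^{m-r}\}$ of order $p^r$ (since $\langle\widetilde{v}\rangle\bmod p^m\subseteq\langle\widetilde{u}\rangle\bmod p^m$), so every target lies in the image and each fiber has size $p^{r+\min\{r,n-r\}}/p^r=p^{\min\{r,n-r\}}$, giving the stated total. The trade-off is that your route essentially re-runs the case $r\leq m-r$ of Proposition \ref{count prop} (which already establishes this count), whereas the paper's point in stating Proposition \ref{count prop'} separately is precisely the shortcut of solving for $b$ last; on the other hand, your fiber-of-a-homomorphism packaging makes the group-theoretic structure of the solution set more transparent and avoids the nested $\min/\max$ bookkeeping entirely, so it is a legitimate and arguably cleaner alternative.
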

\begin{proof}Notice that by (\ref{uv divisible}), the left hand side of (\ref{cond1}) is always congruent to $1$ modulo $p^{m-r}$. This means that given any choices of $a$ and $d$, there exists $b$, which is unique modulo $p^r$, for which (\ref{cond1}) holds. If $r\leq m-r$, then (\ref{cond2}) is vacuous, so there is no other restriction on $b$, whence we have
\[p^r\cdot 1\cdot p^{\min\{r,n-r\}}\]
admissible triplets $(a,b,d)$ modulo $\mathbb{M}$, as claimed.
\end{proof}

We now take condition (\ref{iso criterion}) into account. We consider the three cases:
\begin{enumerate}[(1)]
\item $m-r<r$;
\item $r\leq m-r$ and $m<n$;
\item $r\leq m-r$ and $m=n$.
\end{enumerate}
On the one hand, cases (1) and (2) may be treated using the proof of Proposition \ref{count prop}, except that the number of the choices for $b$ modulo $p^r$ might need to be adjusted to make sure that (\ref{iso criterion}) is satisfied. On the other hand, case (3) may similarly be dealt with using the proof of Proposition \ref{count prop'}, except we must pick $a$ and $d$ suitably so that (\ref{iso criterion}) holds.

\begin{proof}[Proof of Theorem \ref{main thm} when $m\leq n$: Cases (1) and (2)] Observe that
\begin{align*}
 m -r +1& \leq r \leq \max\{r,n-r\}\\
 m-r + 1&\leq n-r = \max\{r,n-r\}
 \end{align*}
in cases (1) and (2), respectively. We then see from (\ref{uv divisible}) that
\[ \widetilde{v} \equiv 1\pmmod{p^{m-r+1}}.\]
Hence, the condition (\ref{cond1}) implies
\[ 1 \equiv 1 + (\mu_a + b)p^{m-r}\pmmod{p^{m-r+1}},\mbox{ namely } \mu_a \equiv -b \pmmod{p}.\]
So by (\ref{iso criterion'}), an admissible triplet $(a,b,d)$ satisfies (\ref{iso criterion}) exactly when 
\[ 1 - 2b\not\equiv0\pmmod{p}.\]
In case (1), this is satisfied by every admissible triplet $(a,b,d)$ by (\ref{cond2}), and so we deduce from Proposition \ref{count prop} that
\[ |T(G)| = 2p^{m-r}\cdot p^{\min\{r,n-r\}}.\]
In case (2), the condition (\ref{cond2}) is vacuous, and requiring $1-2b\not\equiv 0\pmod{p}$ means that instead of $p^r$, we only have $(p-1)p^{r-1}$ choices for $b$ modulo $p^r$. The same argument in Proposition \ref{count prop} then gives us
\[ |T(G)| = (p-1)p^{r-1}\cdot p^{\min\{r,n-r\}}.\]
This proves the theorem when $m\leq n$ in cases (1) and (2).
\end{proof}

\begin{proof}[Proof of Theorem \ref{main thm} when $m\leq n$: Case (3)] Let us make a change of variables for $d$ analogous to (\ref{mu}). Note that $\max\{r,n-r\} = m-r$ in this case. For any $d\in \bZ$, by (\ref{uv divisible}) and Lemma \ref{simple lemma}(a), there exists $\nu_d\in\bZ$ such that
\[ \widetilde{v}^d \equiv 1 + \nu_d p^{m-r} \pmmod{p^m},\]
and $\widetilde{v}$ mod $p^m$ has multiplicative order $p^r$. It follows that
\[ \bZ/p^r\bZ \longrightarrow \bZ/p^r\bZ;\hspace{1em}d+p^r\bZ\mapsto \nu_d+p^r\bZ\]
is a well-defined bijection. Observe that then (\ref{cond1}) implies
\[ bp^{m-r} \equiv (\nu_d - \mu_a)p^{m-r}\pmmod{p^{m-r+1}},\mbox{ namely }b\equiv \nu_d-\mu_a\pmmod{p}.\]
So by (\ref{iso criterion'}), an admissible triplet $(a,b,d)$ satisfies (\ref{iso criterion}) if and only if
\[ 1 + 2\mu_a - \nu_d\not\equiv0\pmmod{p}.\]
This means that we cannot pick both $a$ and $d$ arbitrarily anymore. Instead, once we pick $a$,  we only have $(p-1)p^{r-1}$ choices for $d$ modulo $p^{r}$. From the same argument in Proposition \ref{count prop'}, we then see that 
\[ |T(G)| = p^r \cdot 1 \cdot (p-1)p^{r-1}.\]
This proves the theorem when $m\leq n$ in case (3).
\end{proof}

\subsection{The case $n\leq m-r$} In this subsection, assume that $n\leq m-r$. From Corollary \ref{criterion cor'}, we know that a triplet $(a,b,d)$ is admissible if and only if
\begin{align}\label{cond1'}
\widetilde{u}^a(1+bp^{m-r})& \equiv1\pmmod{p^m} \\\label{cond2'}
\widetilde{v}^d&\equiv1\pmmod{p^n}
\end{align}
are satisfied. Hence, we simply have to pick $(a,b)$ and $d$ such that (\ref{cond1'}) and (\ref{cond2'}) are satisfied, respectively.

\begin{prop}\label{count prop''}The number of admissible triplet modulo $\mathbb{M}$ is equal to $p^r$.
\end{prop}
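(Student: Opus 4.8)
The plan is to exploit the fact that the two admissibility conditions (\ref{cond1'}) and (\ref{cond2'}) decouple completely: the first constrains only the pair $(a,b)$, while the second constrains only $d$. So I would count the admissible pairs $(a,b)$ modulo $p^r\times p^r$ and the admissible $d$ modulo $p^{\min\{r,n-r\}}$ separately, then multiply.

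For the variable $d$, I would determine the multiplicative order of $\widetilde{v}$ modulo $p^n$. By (\ref{uv divisible}) we have $p^{\max\{r,n-r\}}\parallel \widetilde{v}-1$, and since $r\leq n$ gives $\max\{r,n-r\}\leq n$, Lemma \ref{simple lemma}(a) shows this order is $p^{n-\max\{r,n-r\}} = p^{\min\{r,n-r\}}$, using the identity $\max\{r,n-r\}+\min\{r,n-r\}=n$. As this order coincides exactly with the modulus $p^{\min\{r,n-r\}}$ used for $d$ in $\mathbb{M}$, the condition (\ref{cond2'}) forces $d\equiv 0$, leaving precisely one admissible class of $d$.

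For the pair $(a,b)$, I would rewrite (\ref{cond1'}) as $1+bp^{m-r}\equiv \widetilde{u}^{-a}\pmod{p^m}$. Fixing $a$ (there are $p^r$ classes modulo $p^r$), the right-hand side is congruent to $1$ modulo $p^{m-r}$ by (\ref{uv divisible}), hence a unit. Since the map $b\mapsto 1+bp^{m-r}$ sends $\bZ/p^r\bZ$ bijectively onto the set of residues congruent to $1$ modulo $p^{m-r}$ inside $(\bZ/p^m\bZ)^\times$ (two values of $b$ agreeing modulo $p^m$ exactly when they agree modulo $p^r$), there is a unique $b$ modulo $p^r$ satisfying (\ref{cond1'}). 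Hence there are $p^r\cdot 1=p^r$ admissible pairs $(a,b)$, and multiplying by the single admissible $d$ yields $p^r$ admissible triplets modulo $\mathbb{M}$.

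The argument is essentially bookkeeping once the conditions are seen to decouple; the one place needing care is the count for $d$, where I must correctly identify the order of $\widetilde{v}$ modulo $p^n$ and verify that it matches the modulus $p^{\min\{r,n-r\}}$ so that exactly one residue survives. Alternatively, one could carry out the change of variables $a\mapsto\mu_a$ from (\ref{mu}) and reduce (\ref{cond1'}) to $\mu_a+b\equiv 0\pmod{p^r}$; this uses $r\leq m-r$, which holds here since $r\leq n\leq m-r$, and will in any event be the convenient form when imposing the extra condition (\ref{iso criterion'}) afterward.
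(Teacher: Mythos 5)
Your proof is correct and follows essentially the same route as the paper: the conditions (\ref{cond1'}) and (\ref{cond2'}) decouple, the order of $\widetilde{v}$ modulo $p^n$ forces $d\equiv 0$, and the remaining condition pairs $a$ and $b$ bijectively, giving $p^r$ triplets. The only (immaterial) difference is that you fix $a$ and solve uniquely for $b$, whereas the paper fixes $b$ and solves uniquely for $a$ using that $\widetilde{u}$ has order $p^r$ modulo $p^m$.
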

\begin{proof}By (\ref{uv divisible}) and Lemma \ref{simple lemma}(a), we know that 
\[ \lvert\widetilde{u}\mmod{p^m}\rvert = p^r\mbox{ and }\lvert\widetilde{v}\mmod{p^n}\rvert = p^{\min\{r,n-r\}}.\]
The latter implies that there is only one choice, namely the zero element, for $d$ modulo $p^{\min\{r,n-r\}}\bZ$ such that (\ref{cond2'}) holds. The former implies that $b$ may be chosen arbitrarily, and then $a$ modulo $p^r$ is uniquely determined by (\ref{cond1'}). Thus, indeed we have $1\cdot p^{r}\cdot 1$ admissible triplets $(a,b,d)$ modulo $\mathbb{M}$.
\end{proof}

\begin{proof}[Proof of Theorem \ref{main thm} when $n\leq m-r$] The argument is very similar to that on p. 20. The condition (\ref{cond1'}) implies that
\[ 1 + (\mu_a + b)p^{m-r}\equiv1\pmmod{p^{m-r+1}},\mbox{ namely } \mu_a \equiv -b \pmmod{p}.\]
So by (\ref{iso criterion'}), an admissible triplet $(a,b,d)$ satisfies (\ref{iso criterion}) exactly when 
\[ 1 - 2b\not\equiv0\pmmod{p}.\]
This means that instead of $p^r$, we only have $(p-1)p^{r-1}$ choices for $b$ modulo $p^r$. By the same argument in Proposition \ref{count prop''}, we then obtain
\[ |T(G)| = (p-1)p^{r-1}.\]
This proves the theorem when $n\leq m-r$.
\end{proof}

\section{Elements in the multiple holomorph}\label{last section}

In \cite{Caranti3} and \cite{Tsang ADM}, two different methods of constructing elements in the multiple holomorph were given. In this section, let us recall these constructions, and compare them with the $\pi_{a,b,d}$ calculated in Proposition \ref{pi prop}. It shall also be helpful to recall the definition of $\mathbb{M}$ in Section \ref{count section}.

\vspace{1mm}

First, consider a finite $p$-group $P$. For any $\ell\in\bZ$ coprime to $p$, the map
\[ \pi_\ell : P \longrightarrow P; \hspace{1em} \pi_\ell(\sigma) = \sigma^\ell\]
is a bijection. Of course $\pi_\ell$ need not lie in $\NHol(P)$, and so $\pi_\ell \Hol(P)$ might not be an element of $T(P)$ in general. Nonetheless, in \cite[Proposition 3.1]{Caranti3}, it was shown that if $P$ has nilpotency class $2$, then
\[ \{\pi_\ell \Hol(P) : \ell \in \bZ\mbox{ coprime to $p$}\} \simeq (\bZ/p^e\bZ)^\times\]
is a cyclic subgroup of $T(P)$ whose order is given by
\[ (p-1)p^{e-1},\mbox{ where }\exp(P/Z(P)) = p^e\]
is the exponent of $P/Z(P)$ and $Z(P)$ denotes the center of $P$.

\begin{lem}\label{center lemma}We have $Z(G) = \langle x^{p^r},y^{p^r}\rangle$ and so $\exp(G/Z(G)) = p^r$.
\end{lem}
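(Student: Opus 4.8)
The plan is to determine the center $Z(G)$ directly from the presentation (\ref{metacyclic G}) and then read off the exponent of the quotient. A general element of $G$ can be written uniquely as $x^iy^j$ with $0\leq i<p^m$ and $0\leq j<p^n$, so $x^iy^j\in Z(G)$ if and only if it commutes with both generators $x$ and $y$. The relation $yxy^{-1}=x^{1+p^{m-r}}$ gives, by the conjugation formula $(x^iy^j)$ already recorded in the excerpt, explicit expressions for the commutators, so the centrality condition reduces to two congruences on $i$ and $j$.

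The key computation is to commute $x^iy^j$ past $x$ and past $y$. Commuting with $x$ forces a condition coming from how $y^j$ conjugates $x$: using $y^jxy^{-j}=x^{(1+p^{m-r})^j}$, one finds $x^iy^j$ centralizes $x$ precisely when $(1+p^{m-r})^j\equiv1\pmod{p^m}$. By Lemma \ref{simple lemma}(a) applied with $z=1+p^{m-r}$, since $1+p^{m-r}\equiv1\pmod{p}$, this is equivalent to $p^r\mid j$. Symmetrically, commuting with $y$ uses the conjugate of $y$ by $x^i$; here the relevant expression is $x^iyx^{-i}$, and since $x$ is normal with $y$ acting as above, one extracts a condition on $i$ that simplifies to $p^r\mid i$. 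The two conditions together give $i\equiv0$ and $j\equiv0\pmod{p^r}$, so $Z(G)=\langle x^{p^r},y^{p^r}\rangle$, exactly as claimed.

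Once the center is identified, the exponent statement is immediate: $G/Z(G)$ is generated by the images of $x$ and $y$, which have orders $p^r$ and $p^r$ respectively modulo $Z(G)$ (since $x^{p^r},y^{p^r}\in Z(G)$ but no smaller power lies in $Z(G)$ by the divisibility conditions just derived). As $G/Z(G)$ is metacyclic of exponent dividing $p^r$ and contains an element of order exactly $p^r$, we conclude $\exp(G/Z(G))=p^r$.

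The only delicate point I anticipate is verifying the congruence for commuting with $y$, since $x^i$ does not normalize $\langle y\rangle$ in an obvious way; one must use the defining relation to rewrite $x^iyx^{-i}$ and confirm that the resulting condition is again $p^r\mid i$ rather than something weaker. This should follow cleanly from Lemma \ref{simple lemma}(a) and the $p$-adic valuation of $p^{m-r}$, but it is the step where the asymmetry between the two generators could introduce an off-by-one in the exponent, so I would double-check it against the constraint $1\leq r\leq\min\{n,m-1\}$ to ensure $p^r$ genuinely divides both $p^m$ and $p^n$ and the center is a proper subgroup of the expected index $p^{2r}$.
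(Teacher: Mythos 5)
Your proposal is correct and follows essentially the same route as the paper: compute the commutators of $x^iy^j$ with the two generators, obtain the conditions $(1+p^{m-r})^j\equiv1\pmod{p^m}$ and $p^m\mid ip^{m-r}$, and conclude $i,j\equiv0\pmod{p^r}$ via Lemma \ref{simple lemma}(a). The ``delicate point'' you flag resolves exactly as you expect, since $x^iy^j\cdot y\cdot(x^iy^j)^{-1}=x^iyx^{-i}=x^{-ip^{m-r}}y$, giving $p^r\mid i$ with no off-by-one.
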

\begin{proof}For any $i,j\in\bZ$, we have
\begin{align*}
(x^iy^j) x (x^iy^j)^{-1} x^{-1} &= x^{(1+p^{m-r})^j-1},\\
(x^iy^j) y (x^iy^j)^{-1} y^{-1} &= x^{-p^{m-r}i}.
\end{align*}
Since $1+p^{m-r}$ mod $p^m$ has order $p^r$ by Lemma \ref{simple lemma}(a), we see that $x^iy^j$ lies in the center of $G$ if and only if $i,j\equiv0\pmod{p^r}$, whence the claims.\end{proof}

\begin{lem}\label{class lemma}If $r\leq m-r$, then $G$ has nilpotency class $2$.
\end{lem}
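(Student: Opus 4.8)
The plan is to show that the commutator subgroup $[G,G]$ is contained in the center $Z(G)$; since $G$ is non-abelian by the standing assumption, this is exactly the statement that $G$ has nilpotency class $2$.

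First I would compute $[G,G]$. From the defining relation $yxy^{-1} = x^{1+p^{m-r}}$, the basic commutator is $[y,x] = yxy^{-1}x^{-1} = x^{p^{m-r}}$. Since $\langle x\rangle$ is normal in $G$, every commutator in $G$ is a power of $x^{p^{m-r}}$; equivalently, passing to the abelianization imposes precisely the relation $x^{p^{m-r}} = 1$. Hence $[G,G] = \langle x^{p^{m-r}}\rangle$, which is cyclic of order $p^r$ sitting inside $\langle x\rangle \cong \bZ/p^m\bZ$.

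Next I would invoke Lemma \ref{center lemma}, which gives $Z(G) = \langle x^{p^r}, y^{p^r}\rangle$. It then suffices to verify the single containment $x^{p^{m-r}} \in Z(G)$. Using the hypothesis $r \leq m-r$, we have $m - 2r \geq 0$, so
\[ x^{p^{m-r}} = (x^{p^r})^{p^{m-2r}} \in \langle x^{p^r}\rangle \subseteq Z(G). \]
Therefore $[G,G] \subseteq Z(G)$, and the lemma follows.

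The argument is essentially immediate, so there is no real obstacle; the only point requiring care is the determination of $[G,G]$, namely confirming that the single element $x^{p^{m-r}}$ already generates the entire commutator subgroup. This is where the split metacyclic structure of the presentation is used, and everything else reduces to the divisibility $p^r \mid p^{m-r}$ supplied by the hypothesis $r \leq m-r$.
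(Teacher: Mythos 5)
Your proof is correct and follows essentially the same route as the paper: both arguments reduce to the observation that every commutator is a power of $x^{p^{m-r}}$, which lies in $Z(G)=\langle x^{p^r},y^{p^r}\rangle$ precisely because $r\leq m-r$. The only cosmetic difference is that the paper computes a general commutator $(x^{i_1}y^{j_1})(x^{i_2}y^{j_2})(x^{i_1}y^{j_1})^{-1}(x^{i_2}y^{j_2})^{-1}$ explicitly, whereas you first identify $[G,G]=\langle x^{p^{m-r}}\rangle$ and then check the single generator.
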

\begin{proof}For any $i_1,i_2,j_1,j_2\in\bZ$, we have
\[(x^{i_1}y^{j_1})(x^{i_2}y^{j_2})(x^{i_1}y^{j_1})^{-1}(x^{i_2}y^{j_2})^{-1}  = x^{i_1(1-(1+p^{m-r})^{j_2}) - i_2(1 - (1+p^{m-r})^{j_1})}.\]
Clearly the exponent is divisible by $p^{m-r}$. We then see from Lemma \ref{center lemma} that every commutator lies in $Z(G)$ if $r\leq m-r$, and this implies the claim.
\end{proof}

Lemmas \ref{center lemma} and \ref{class lemma}, together with \cite{Caranti3}, then show that the power maps $\pi_\ell$ define a cyclic subgroup of order $(p-1)p^{r-1}$ in $T(G)$ when $r\leq m-r$. Note that by Theorem \ref{main thm}, this means that
\[ T(G) = \{\pi_\ell\Hol(G) : \ell\in\bZ\mbox{ coprime to }p\}\]
when $n\leq m-r$. Let us now show that these $\pi_\ell$ correspond precisely to the admissible triplets $(a,b,0)$, namely those with $d\equiv 0\pmod{p^{\min\{r,n-r\}}}$ (satisfying (\ref{iso criterion}) so that $N_{\Gamma_{a,b,0}}$ is isomorphic to $G$), in the cases $m\leq n$ with $r\leq m-r$, or $n\leq m-r$. 

\begin{prop}If $m\leq n$ with $r\leq m-r$, or $n\leq m-r$, then the number of admissible triplets $(a,b,0)$ modulo $\mathbb{M}$ satisfying (\ref{iso criterion}) is equal to $(p-1)p^{r-1}$, and all such $(a,b,0)$ satisfies
\[ \pi_{a,b,0} \equiv \pi_{-j_{a,b,0}} \pmmod{\Aut(G)},\]
where $j_{a,b,0}$ is defined as in (\ref{jabd}).
\end{prop}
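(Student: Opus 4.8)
The plan is to prove the two assertions separately and then connect them. The first part, that the number of admissible triplets $(a,b,0)$ modulo $\mathbb{M}$ satisfying (\ref{iso criterion}) is $(p-1)p^{r-1}$, should follow by specializing the counting arguments already carried out. In the case $n\leq m-r$, Corollary \ref{criterion cor'} says admissibility requires $\widetilde{u}^a(1+bp^{m-r})\equiv1\pmod{p^m}$ together with $\widetilde{v}^d\equiv1\pmod{p^n}$; fixing $d=0$ automatically satisfies the second condition, and then for each choice of $b$ the value of $a$ modulo $p^r$ is uniquely determined, exactly as in the proof of Proposition \ref{count prop''}. The restriction (\ref{iso criterion}), rewritten via (\ref{iso criterion'}) as $1+\mu_a-b\not\equiv0\pmod p$, combined with the relation $\mu_a\equiv-b\pmod p$ forced by admissibility, reduces to $1-2b\not\equiv0\pmod p$, cutting the $p^r$ choices for $b$ down to $(p-1)p^{r-1}$, as in the final proof of Theorem \ref{main thm}. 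For the case $m\leq n$ with $r\leq m-r$, I would run the same specialization of Corollary \ref{criterion cor}: setting $d=0$ forces $\nu_d=0$, and an essentially identical count again yields $(p-1)p^{r-1}$.

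For the second part, the strategy is to compute $\pi_{-j_{a,b,0}}$ explicitly and match it against the formula for $\pi_{a,b,0}$ given in Proposition \ref{pi prop}, working modulo $\Aut(G)$. With $b=d=0$ the formula in Proposition \ref{pi prop} simplifies considerably, since $S(\widetilde{v}^0,j_0j)=S(1,j_0j)=j_0j$ and the $bp^{m-r}$ factor drops out, giving $\pi_{a,b,0}(x^iy^j)=x^{-ik^{-j_0j}}y^{-j_0j}$ where $k=1+p^{m-r}$ and $j_0=j_{a,b,0}$. On the other side, the power map sends $x^iy^j\mapsto(x^iy^j)^{-j_0}$, and using the power formula $(x^iy^j)^\ell=x^{iS((1+p^{m-r})^j,\ell)}y^{j\ell}$ recorded before Lemma \ref{simple lemma} (applied with $\ell=-j_0$, interpreting the geometric-sum identities via Lemma \ref{simple lemma}), I expect to obtain $\pi_{-j_0}(x^iy^j)=x^{-ij_0S(k^j,\,\cdot\,)}y^{-j_0j}$, whose $\langle x\rangle$-exponent should coincide with $-ik^{-j_0j}$ up to a factor that is a unit realizable by some power of $\alpha$ and hence absorbed modulo $\Aut(G)$. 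The point is that $\pi_{a,b,0}$ and $\pi_{-j_0}$ both kill the $\langle y\rangle$-component of $j$ in the same way, namely $y^{-j_0j}$, and differ on the $\langle x\rangle$-component only by an automorphism.

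The main obstacle will be the bookkeeping on the $\langle x\rangle$-exponent: verifying that the difference between $k^{-j_0j}$ appearing in $\pi_{a,b,0}$ and the geometric-sum exponent appearing in $\pi_{-j_0}$ is precisely of the form $i\mapsto x^{ui}$ for a fixed unit $u$ modulo $p^m$ independent of $i$ and $j$, so that the discrepancy is genuinely an element of $\langle\alpha\rangle\subseteq\Aut(G)$ rather than something $j$-dependent that could only be absorbed by a map varying with $j$. Here I would lean on the identity $1+(k-1)S(k,\ell)=k^\ell$ already exploited in the Remark following Proposition \ref{pi prop}, which converts geometric sums of $k$ into powers of $k$, to show the two exponents agree exactly (making $u=1$) rather than merely up to a unit. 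Once the $\langle x\rangle$-components are shown to match on the nose, or to differ by a single power of $\alpha$, the congruence $\pi_{a,b,0}\equiv\pi_{-j_{a,b,0}}\pmod{\Aut(G)}$ follows immediately, and the earlier discussion around (\ref{pi N}) guarantees these represent the same class in $T(G)$.
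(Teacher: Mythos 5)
Your count of the admissible triplets $(a,b,0)$ is essentially the paper's argument (restrict (\ref{iso criterion'}) via $\mu_a\equiv-b\pmod p$ to get $1-2b\not\equiv0\pmod p$, leaving $(p-1)p^{r-1}$ choices for $b$, with $a$ then determined), and that part is fine. The second half, however, has a genuine gap: you set $b=0$ in addition to $d=0$, but the proposition concerns all $(p-1)p^{r-1}$ admissible triplets $(a,b,0)$, and for almost all of these $b\not\equiv0\pmod p$ (the case $b\equiv a\equiv 0$ is the single triplet giving $\pi_{0,0,0}=\iota=\pi_{-1}$). Consequently your simplification of Proposition \ref{pi prop} is incorrect: with $d=0$ one has $\pi_{a,b,0}(x^iy^j)=x^{-i(1+bp^{m-r}S(k,j_0j))k^{-j_0j}}y^{-j_0j}$, and the factor $1+bp^{m-r}S(k,j_0j)\equiv 1+bj_0jp^{m-r}$ is $j$-dependent, so it can neither be dropped nor absorbed into a fixed power of $\alpha$. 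Indeed, if you discard it, matching the $\langle x\rangle$-exponents of $\pi_{a,b,0}$ (composed with a power of $\alpha$) and $\pi_{-j_0}$ would force $j_{a,b,0}\equiv1\pmod{p^r}$, which happens only when $b\equiv0$.

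The other missing ingredient is the arithmetic input that makes the $j$-linear terms agree, and this is where the specific value of $j_{a,b,0}$ enters. Writing $\ell=-j_{a,b,0}$, Lemma \ref{k power} gives $\pi_\ell(x^iy^j)=x^{i(\ell+\frac{1}{2}\ell(\ell-1)p^{m-r}j)}y^{j\ell}$, while for $\varphi\in\langle\alpha\rangle$ with $\varphi(x)=x^{-\ell}$ one computes $(\varphi\circ\pi_{a,b,0})(x^iy^j)=x^{i(\ell+\ell^2(1-b)p^{m-r}j)}y^{j\ell}$. Equality requires $2\ell^2(1-b)\equiv\ell(\ell-1)\pmod{p^r}$, which is \emph{not} a formal consequence of the identity $1+(k-1)S(k,\ell)=k^\ell$ you propose to lean on; it follows only by combining the admissibility relation $\mu_a\equiv-b\pmod{p^r}$ (so that $\widetilde{u}^a(1+(1-b)p^{m-r})\equiv1+(1-2b)p^{m-r}\pmod{p^m}$) with the defining congruence (\ref{jabd}), which then yields $j_{a,b,0}(1-2b)\equiv1\pmod{p^r}$. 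Your plan never invokes (\ref{jabd}) or the relation between $a$ and $b$ imposed by Corollaries \ref{criterion cor} and \ref{criterion cor'}, so the ``bookkeeping on the $\langle x\rangle$-exponent'' that you correctly identify as the main obstacle cannot be completed as outlined.
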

\begin{proof}Suppose that $m\leq n$ with $r\leq m-r$, or $n\leq m-r$. The first claim essentially follows from the proof of Theorem \ref{main thm}: with the restriction (\ref{iso criterion}) we only have $(p-1)p^{r-1}$ choices for $b$ modulo $p^r$, and once $b$ is chosen $a$ modulo $p^r$ is uniquely determined by (\ref{cond1}) or (\ref{cond1'}).

\vspace{1mm}

For any admissible triplet $(a,b,0)$ satisfying (\ref{iso criterion}), let us write $j_{a,b} = j_{a,b,0}$ and $\ell_{a,b} = - j_{a,b,0}$. On the one hand, observe that
\begin{align*}\pi_{\ell_{a,b}}(x^iy^j)
& = x^{iS((1+p^{m-r})^{j},\ell_{a,b})}y^{j\ell_{a,b}}\\
& = x^{iS(1+jp^{m-r},\ell_{a,b})}y^{j\ell_{a,b}}\\
& = x^{i(\ell_{a,b} + \frac{1}{2}\ell_{a,b}(\ell_{a,b}-1)jp^{m-r})}y^{j\ell_{a,b}}\end{align*}
by Lemma \ref{k power}. On the other hand, since $j_{a,b}$ is coprime to $p$, there exists $\varphi$ which is a power of $\alpha$ such that $\varphi(x) = x^{-\ell_{a,b}}$ and $\varphi(y)=y$. By Proposition \ref{pi prop} and Lemma \ref{k power}, together with $r\leq m-r$, we then see that
\begin{align*}
(\varphi\circ\pi_{a,b,0})(x^iy^j) & = \varphi(x^{-i(1+bp^{m-r} S(1+p^{m-r},j_{a,b}j))(1+p^{m-r})^{-j_{a,b}j}}y^{-j_{a,b}j})\\
& = \varphi(x^{-i(1+ j_{a,b}jbp^{m-r})(1+\ell_{a,b}jp^{m-r})} y^{j\ell_{a,b}})\\
& = \varphi(x^{-i(1 + \ell_{a,b}jp^{m-r}(1-b))}y^{j\ell_{a,b}})\\
& = x^{i(\ell_{a,b} + \ell_{a,b}^2(1-b)jp^{m-r})}y^{j\ell_{a,b}}.
\end{align*}
Recall the notation in (\ref{mu}). Then, from (\ref{cond1}) or (\ref{cond1'}), we deduce that
\[ (1+\mu_ap^{m-r})(1+bp^{m-r})\equiv1\pmmod{p^m},\mbox{ namely }\mu_a \equiv -b \pmmod{p^r}.\]
Also, by Lemma \ref{k power} and $r\leq m-r$, we see that
\begin{align*} (\widetilde{u}^a(1+(1-b)p^{m-r}))^{j_{a,b}} 
& \equiv (1+ j_{a,b}\mu_ap^{m-r})(1+j_{a,b}(1-b)p^{m-r})\pmmod{p^m}\\
& \equiv 1 + j_{a,b}(1 +\mu_a -b)p^{m-r}\pmmod{p^m}\\
& \equiv 1 + j_{a,b}(1-2b)p^{m-r}\pmmod{p^m}.
\end{align*}
By the definition (\ref{jabd}), this implies that
\[j_{a,b}(1-2b)p^{m-r}\equiv p^{m-r}\pmmod{p^{m}}.\]
It then follows that
\begin{align*}
2\ell_{a,b}^2(1-b)p^{m-r} & \equiv  \ell_{a,b}^2(1-2b)p^{m-r} + \ell_{a,b}^2p^{m-r}\pmmod{p^m}\\
& \equiv -\ell_{a,b}p^{m-r} + \ell_{a,b}^2p^{m-r}\pmmod{p^m}\\
& \equiv \ell_{a,b}(\ell_{a,b} - 1)p^{m-r}\pmmod{p^m}.
\end{align*}
We have thus shown that $\varphi \circ \pi_{a,b,0} = \pi_{\ell_{a,b}}$, whence the claim.
\end{proof}

Next, consider a group which is a semidirect product $Q = A\rtimes \langle y\rangle$, where $A$ is any group and $y$ is the generator of order $p^n$ in $G$. For any $v\in\bZ$ with $v\equiv 1\pmod{p}$, in \cite{Tsang ADM} the present author showed  that we have a bijection defined by
\[ \pi_{v}' : Q \longrightarrow Q;
\hspace{1em} \pi_v'((\mathfrak{a} ,y^j)) = (\mathfrak{a},y^{S(v,j)}) \mbox{ for all }\mathfrak{a}\in A \mbox{ and }j\in\bN_{\geq 0}.\]
Again $\pi_{v}'$ might not lie in $\NHol(Q)$. But in \cite{Tsang ADM}, it was proven that $\pi_{v}'$ lies in $\NHol(Q)$ and the order of $\pi_{v}'\Hol(Q)$ in $T(Q)$ is a power of $p$, under suitable hypotheses, one of which is that the exponent of $A$ is coprime to $p$. For our group $G$ in (\ref{metacyclic G}), where $A$ is the cyclic group of order $p^m$, this hypothesis is of course never satisfied. Nevertheless, up to the inversion map 
\[ \iota : G \longrightarrow G;\hspace{1em}\iota(x^iy^j) = (x^iy^j)^{-1},\]
some of the bijections $\pi_{a,b,d}$ do arise in this way when $m\leq n$. 

\begin{prop}\label{last prop}If $m\leq n$, then the number of admissible triplets $(a,0,d)$ modulo $\mathbb{M}$  with $\widetilde{v}^d\equiv1\pmod{p^m}$ is equal to $p^{\min\{r,n-m\}}$, and all such $(a,0,d)$ satisfy (\ref{iso criterion}) as well as the congruence
\[ \iota \circ \pi_{a,0,d} \equiv \pi_{\widetilde{v}^d}'\pmmod{\Aut(G)}.\]\end{prop}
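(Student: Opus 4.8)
The plan is to prove the two assertions of Proposition \ref{last prop} in sequence: first a counting statement, then an explicit identification of the bijection $\pi_{a,b,d}$ (with $b=0$) up to an automorphism with the map $\pi'_{\widetilde{v}^d}$ precomposed with inversion. Throughout I assume $m\leq n$, so that Corollary \ref{criterion cor} governs admissibility and Lemma \ref{k power} (which applies since one checks $r\leq m-r$ is \emph{not} assumed here, so I must be careful) may or may not be available; I expect the condition $\widetilde{v}^d\equiv 1\pmod{p^m}$ to do most of the simplifying work.

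First I would settle the count. Setting $b=0$ in the admissibility criterion of Corollary \ref{criterion cor}, the congruence \[\widetilde{u}^{-a}\widetilde{v}^d\equiv 1\pmod{p^m}\] must hold, and the second condition $b\equiv 0,1\pmod{p^{\max\{2r-m,0\}}}$ is automatic since $b=0$. Now imposing in addition $\widetilde{v}^d\equiv 1\pmod{p^m}$ forces $\widetilde{u}^{-a}\equiv 1\pmod{p^m}$, i.e. $\widetilde{u}^a\equiv 1\pmod{p^m}$; since $\widetilde{u}$ has multiplicative order $p^r$ modulo $p^m$ by (\ref{uv divisible}) and Lemma \ref{simple lemma}(a), this pins down $a\equiv 0\pmod{p^r}$, so $a$ is determined modulo $p^r$. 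The free parameter is then $d$ subject to $\widetilde{v}^d\equiv 1\pmod{p^m}$; since $\widetilde{v}$ has order $p^{\max\{m-\max\{r,n-r\},0\}}$ modulo $p^m$ (again by Lemma \ref{simple lemma}(a) and (\ref{uv divisible})) while $d$ ranges modulo $p^{\min\{r,n-r\}}$, the number of such $d$ is the ratio, which I would compute to equal $p^{\min\{r,n-m\}}$ in the regime $m\leq n$. This yields the stated count $p^{\min\{r,n-m\}}$. Verifying (\ref{iso criterion}) for these triplets is immediate: with $b=0$ and $\widetilde{u}^a\equiv 1$, one has $\widetilde{u}^a(1+(1-b)p^{m-r})=\widetilde{u}^a(1+p^{m-r})\equiv 1+p^{m-r}\not\equiv 1\pmod{p^{m-r+1}}$, so $s_{a,0,d}=0$ and $N_{\Gamma_{a,0,d}}\simeq G$.

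Next I would establish the identity $\iota\circ\pi_{a,0,d}\equiv\pi'_{\widetilde{v}^d}\pmod{\Aut(G)}$. The strategy is to compute both sides explicitly on a general element $x^iy^j$ and compare, absorbing any discrepancy into a power of $\alpha$ or $\delta$. Starting from Proposition \ref{pi prop} with $b=0$, the formula collapses dramatically: the factor $1+bp^{m-r}S(\cdots)$ becomes $1$ and $k^{-S(\widetilde{v}^d,j_0 j)}$ remains, giving $\pi_{a,0,d}(x^iy^j)=x^{-i\,k^{-S(\widetilde{v}^d,j_0 j)}}y^{-S(\widetilde{v}^d,j_0 j)}$, where $j_0=j_{a,0,d}$. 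Applying $\iota$ (which sends $x^Iy^J$ to its inverse) converts this into a positive power of $y$ and a correspondingly adjusted power of $x$. On the other side, $\pi'_{\widetilde{v}^d}(x^iy^j)=(x^i,y^{S(\widetilde{v}^d,j)})$ in the semidirect-product notation, i.e. $x^iy^{S(\widetilde{v}^d,j)}$. The key point is that $j_{a,0,d}$, being coprime to $p$ and satisfying (\ref{jabd}), relates $S(\widetilde{v}^d,j_0 j)$ to $S(\widetilde{v}^d,j)$ up to the order of $\widetilde{v}$, and the leftover power of $x$ differs from $x^i$ only by an exponent that is a unit modulo $p^m$ and independent of $j$, which is exactly the freedom afforded by composing with a suitable power of $\alpha$ (acting as $x\mapsto x^u$, $y\mapsto y$). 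I would therefore exhibit $\varphi\in\langle\alpha\rangle$ realizing this correction and check that $\iota\circ\pi_{a,0,d}=\varphi\circ\pi'_{\widetilde{v}^d}$ on the nose.

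The main obstacle I anticipate is the bookkeeping around the index $j_{a,0,d}$ and the sums $S(\widetilde{v}^d,\cdot)$. Because $j_0$ is only determined modulo $p^r$ and $\widetilde{v}^d$ has order dividing $p^{\min\{r,n-r\}}$, I must verify that $S(\widetilde{v}^d,j_0 j)$ and $S(\widetilde{v}^d,j)$ agree after accounting for the multiplicative order of $\widetilde{v}^d$, and that the resulting exponent of $x$ is genuinely a $j$-independent unit so that a single automorphism $\varphi$ works uniformly in $j$. I expect to lean on the multiplicativity-type identity $S(z,\ell_1\ell_2)=S(z,\ell_1)\,S(z^{\ell_1},\ell_2)$ together with Lemma \ref{simple lemma}(b) to control the $p$-adic valuations, and on the condition $\widetilde{v}^d\equiv 1\pmod{p^m}$ to ensure that $S(\widetilde{v}^d,\cdot)$ behaves essentially additively modulo the relevant power of $p$. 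Once these valuation and periodicity checks are in place, the comparison reduces to matching two explicit exponents, and the congruence modulo $\Aut(G)$ follows.
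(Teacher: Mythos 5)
Your counting argument and your verification of (\ref{iso criterion}) are correct and match the paper's. The gap is in the second half, the relation $\iota\circ\pi_{a,0,d}\equiv\pi_{\widetilde{v}^d}'$. The key observation you are missing is that here one may simply take $j_{a,0,d}=1$: since $\widetilde{u}^a\equiv1\pmod{p^m}$ and $b=0$, we have $\widetilde{u}^a(1+(1-b)p^{m-r})\equiv 1+p^{m-r}\pmod{p^m}$, which is exactly the right-hand side of (\ref{jabd}) with $s_{a,0,d}=0$, so $j_{a,0,d}=1$ is a legitimate choice (and the end of Section \ref{iso section} already guarantees that the class of $\pi_{a,b,d}$ modulo $\Aut(G)$ does not depend on this choice). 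With $j_0=1$, Proposition \ref{pi prop} gives $\pi_{a,0,d}(x^iy^j)=x^{-ik^{-S}}y^{-S}$ with $S=S(\widetilde{v}^d,j)$, and
\[ \bigl(x^{-ik^{-S}}y^{-S}\bigr)^{-1}=y^{S}x^{ik^{-S}}=x^{ik^{-S}k^{S}}y^{S}=x^{i}y^{S(\widetilde{v}^d,j)}=\pi_{\widetilde{v}^d}'(x^iy^j), \]
an equality on the nose with no correcting automorphism and none of the bookkeeping you anticipate.

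By contrast, the route you propose — keeping $j_0$ general and absorbing the discrepancy into a power of $\alpha$ — would not go through. The computation above shows that after applying $\iota$ the $x$-exponent is exactly $i$ (the factor $k^{-S}$ is cancelled by conjugation when $y^{S}$ is moved to the right), so there is no unit in the $x$-exponent to correct; the entire discrepancy for $j_0\neq1$ sits in the $y$-exponent, namely $S(\widetilde{v}^d,j_0j)$ versus $S(\widetilde{v}^d,j)$, and these genuinely differ modulo $p^n$ for a general admissible representative $j_0$. A power of $\alpha$ fixes $y$ and therefore cannot repair this; the automorphism that accounts for a change of $j_{a,0,d}$ is a power of $\delta$, as in the paper's discussion following (\ref{pi N}). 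So either invoke that independence statement and set $j_0=1$, or you will be forced to redo that analysis with $\delta$ rather than $\alpha$; as written, the plan for the second assertion fails.
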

\begin{proof}Suppose that $m\leq n$. Note that $\widetilde{v}^d\equiv1\pmod{p^m}$ is equivalent to
\[ d\equiv 0\pmmod{p^{\max\{m-\max\{r,n-r\},0\}}}\]
by (\ref{uv divisible}) and Lemma \ref{simple lemma}(a). Hence, we have
\[ p^{\min\{r,n-r\} - \max\{m-\max\{r,n-r\},0\}} = p^{\min\{r,n-m\}}\]
choices for $d$ modulo $p^{\min\{r,n-r\}}$. A triplet $(a,0,d)$ with $\widetilde{v}^d\equiv1\pmod{p^m}$ is admissible exactly when $\widetilde{u}^a \equiv 1\pmod{p^m}$ by Corollary \ref{criterion cor}. Thus, by (\ref{uv divisible}), there is only one choice for $a$ modulo $p^r$, namely the zero element, and this proves the first claim.

\vspace{1mm}

For any admissible triplet $(a,0,d)$ with $\widetilde{v}^d\equiv1\pmod{p^m}$, we have
\[ \widetilde{u}^a(1 + (1-0)p^{m-r}) \equiv  1(1 + p^{m-r}) \equiv 1 + p^{m-r}\pmmod{p^m}\]
and so (\ref{iso criterion}) always holds. This also implies that we may take $j_{a,0,d}=1$ for the $j_{a,0,d}$ defined in (\ref{jabd}). With this choice, by Proposition \ref{pi prop}, we have
\begin{align*} (\iota\circ\pi_{a,0,d})(x^iy^j) 
& =  (x^{-i(1+p^{m-r})^{-S(\widetilde{v}^d,j)}}y^{-S(\widetilde{v}^d,j)})^{-1}\\
& = y^{S(\widetilde{v}^d,j)}x^{i(1+p^{m-r})^{-{S(\widetilde{v}^d,j)}}}y^{-S(\widetilde{v}^d,j)}y^{S(\widetilde{v}^d,j)}\\
& = x^iy^{S(\widetilde{v}^d,j)}\\
& = \pi_{\widetilde{v}^d}'(x^iy^j).
\end{align*}
This completes the proof.
\end{proof}

\section*{Acknowledgments} This research was supported by the Fundamental Research Funds for the Central Universities (Award No.: 19lpgy247). The author thanks the referee for helpful comments.

\end{document}